\theoremstyle{plain}
\newtheorem{theorem}{Theorem}
\newtheorem{lemma}[theorem]{Lemma}
\newtheorem{prop}[theorem]{Proposition}
\newtheorem{cor}[theorem]{Corollary}
\theoremstyle{definition}
\newtheorem*{exa}{Example}
\def\C{\mathcal{C}}
\def\F{\mathcal{F}}
\def\P{\mathcal{P}}
\def\T{\mathcal{T}}
\def\U{\mathcal{U}}
\let\notto\nrightarrow
\DeclareMathOperator{\Sproink}{Sproink}
\def\PsiS{\mathop{\Psi\text{-Sproink}}}
\def\dpc{\delta_\pi\C}
\let\restrict\upharpoonright
\def\bs#1{\underline{#1}}
\begin{document}

\title{Adjoint functors and tree duality}
\author{Jan Foniok%
\\
\small ETH Zurich, Institute for Operations Research\\[-5pt]
\small R\"amistrasse 101, 8092 Zurich, Switzerland\\[-5pt]
\small\texttt{foniok@math.ethz.ch}%
\and
Claude Tardif%
\\
\small Royal Military College of Canada\\[-5pt]
\small PO Box 17000, Stn Forces, Kingston, Ontario\\[-5pt]
\small Canada, K7K 7B4\\[-5pt]
\small\texttt{Claude.Tardif@rmc.ca}
}
\date{6 May 2009}

\maketitle

\begin{abstract}
A family~$\T$ of digraphs is a \emph{complete set of obstructions}
for a digraph~$H$ if for an arbitrary digraph~$G$ the existence of
a homomorphism from~$G$ to~$H$ is equivalent to the non-existence
of a homomorphism from any member of~$\T$ to~$G$.  A digraph~$H$ is
said to have \emph{tree duality} if there exists a complete set of
obstructions~$\T$ consisting of orientations of trees. We show that
if $H$~has tree duality, then its arc graph~$\delta H$ also has tree
duality, and we derive a family of tree obstructions for~$\delta H$
from the obstructions for~$H$.

Furthermore we generalise our result to right adjoint functors on
categories of relational structures. We show that these functors
always preserve tree duality, as well as polynomial CSPs and the 
existence of near-unanimity functions.

\bigskip

\textbf{Keywords:} constraint satisfaction, tree duality, adjoint functor

\textbf{2000 Mathematics Subject Classification:} 16B50, 68R10, 18A40, 05C15
\end{abstract}


\section{Introduction}

Our primary motivation is the \emph{$H$-colouring problem} (which has become
popular under the name \emph{Constraint Satisfaction Problem---CSP}):
for a fixed digraph~$H$ (a \emph{template}) decide whether an input
digraph~$G$ admits a homomorphism to~$H$. The computational complexity
of $H$-colouring depends on the template~$H$. For some templates the problem is
known to be NP-complete, for others it is tractable (a polynomial-time
algorithm exists). Assuming that $\mathrm{P}\ne\mathrm{NP}$, infinitely
many complexity classes lie strictly between P and NP~\cite{Lad:StruPTR},
but it has been conjectured that $H$-colouring belongs to no such
intermediate class for any template~$H$~\cite{FedVar:SNP}. This conjecture
has indeed been proved for symmetric templates~$H$~\cite{HelNes:Dicho}.

In this paper the focus is on tractable cases. Several conditions
are known to imply the existence of a polynomial-time algorithm for
$H$-colouring (definitions follow in the next two paragraphs): it
is the case if $H$~has a near-unanimity function (nuf), if $H$~has
bounded-treewidth duality, if $H$~has tree duality, if $H$~has finite
duality (see~\cite{CohJea:CCL,FedVar:SNP,HelNesZhu:DualPoly}). Some of
the conditions are depicted in the diagram (Fig.~\ref{fig:diag}).

\begin{figure}
\begin{center}
\includegraphics{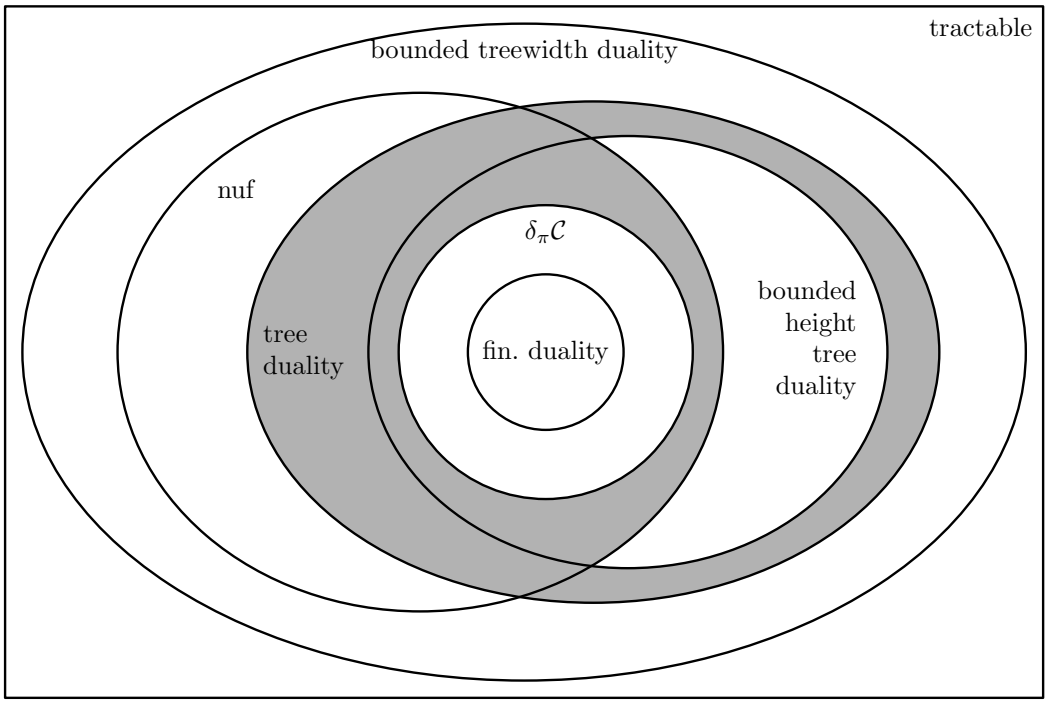}
\end{center}
\caption{The structure of tractable templates}
\label{fig:diag}
\end{figure}

A \emph{near-unanimity function} is a homomorphism~$f$
from~$H^k$ to~$H$ with $k\ge3$ such that for all $x,y\in V(H)$
we have $f(x,x,x,\dotsc,x)=f(y,x,x,\dotsc,x)=f(x,y,x,\dotsc,x)=
\dotsb=f(x,x,x,\dotsc,y)=x$. The power $H^k$ is the $k$-fold product
$H\times H\times\dotsb\times H$ in the category of digraphs and homomorphisms,
see~\cite{HelNes:GrH}.

A digraph is a tree (has treewidth~$k$) if its underlying undirected
graph is a tree (has treewidth~$k$, respectively).  A set $\F$ of
digraphs is a \emph{complete set of obstructions} for~$H$ if for an
arbitrary digraph~$G$ there exists a homomorphism from~$G$ to~$H$ if
and only if no $F\in\F$ admits a homomorphism to~$G$. A template has
\emph{bounded-treewidth duality} if it has a complete set of obstructions
with treewidth bounded by a constant; it has \emph{tree duality} if
it has a complete set of obstructions consisting of trees; and it has
\emph{finite duality} if it has a finite complete set of obstructions.

There is a fairly straightforward 
way to generate templates with finite duality. 
For an arbitrary tree~$T$ there exists a  digraph~$D(T)$ such that $\{T\}$
is a complete set of obstructions for~$D(T)$. The digraph~$D(T)$
is unique up to homomorphic equivalence%
\footnote{Two digraphs $H$ and $H'$ are \emph{homomorphically equivalent}
if there exists a homomorphism from~$H$ to~$H'$ as well as a homomorphism
from~$H'$ to~$H$. Clearly, if $H$ and~$H'$ are homomorphically equivalent,
then $H$-colouring and $H'$-colouring are equivalent problems, because
$H$ and $H'$ admit homomorphisms from exactly the same digraphs.}%
; it is called the \emph{dual} of~$T$. Several explicit constructions are
known (see~\cite{F:Diss,Kom:Phd,NesTar:Dual,NesTar:Short}).
If $\F$ is a finite set of oriented trees, then the product
$D=\prod_{T\in\F}D(T)$ is a template with finite duality and $\F$~is a
complete set of obstructions for~$D$. This construction yields all
digraphs with finite duality~\cite{NesTar:Dual}, thus also proving that
finite duality implies tree duality.

Encouraged by the full description of finite dualities, we aim to provide
a construction for some more digraphs with tree duality. To this end
we use the arc-graph construction and consider the
class~$\delta_\pi{\cal C}$ of digraphs generated from finite duals
by taking iterated arc graphs and finite Cartesian products. We show that
all templates in this class have tree duality. We provide an explicit
construction of the resulting tree obstructions, which allows us to show
that all the digraphs in~$\dpc$ have in fact \emph{bounded-height tree
duality}, that is, they have a complete set of obstructions consisting
of trees of bounded algebraic height (these are tree obstructions that
allow a homomorphism to a fixed directed path).
In this context we also prove that the problem of existence of a complete
set of obstructions consisting of trees with bounded algebraic height
is decidable.

The arc-graph construction is a special case of a more general phenomenon:
it is a right adjoint in the category of digraphs and homomorphisms. We
show in the more general setting of the category of relational structures
that right adjoints (characterised by Pultr~\cite{Pul:The-right-adjoints}
for all locally presentable categories) preserve tractability of templates
and moreover they preserve tree duality and existence of a near-unanimity
function. In this case, nevertheless, it remains open to provide a nice
general description of complete sets of obstructions.

We use some notions and properties of graphs and homomorphisms which the
reader can look up in~\cite{HelNes:GrH}, as well as some category-theory
notions, for which, e.g.~\cite{BarWel:Cat,Mac:Cat} may be consulted.

\section{Arc graphs and tree duality}

Let $G=(V,A)$ be a digraph. The \emph{arc graph} of~$G$ is the digraph
$\delta G =(A,\delta A)$, where
\[\delta A=\bigl\{((u,v),(v,w)) : (u,v),(v,w)\in A\bigr\}.\]
Notice that $\delta$ is an endofunctor%
\footnote{An \emph{endofunctor} is a functor from a category to itself.}
in the category of digraphs
and homomorphisms.  This implies in particular that if $G\to H$, then
$\delta G \to \delta H$. (The notation $G\to H$ means
that there exists a homomorphism from~$G$ to~$H$.)

If $G$ is a digraph and $\sim$~is an equivalence relation on
its vertex set~$V(G)$, the \emph{quotient}~$G/{\sim}$ is the
digraph~$(V(G)/{\sim},A)$, where $V(G)/{\sim}$~is the set of all
equivalence classes of~$\sim$ on~$V(G)$, and for $X,Y\in V(G)/{\sim}$
we have $(X,Y)\in A$ if and only if there exist $x\in X$ and $y\in Y$
such that $(x,y)\in A(G)$.

Suppose still that $G=(V,A)$ is a digraph. Let $V'=\{o_u,t_u:u\in V\}$
and let $A'=\{(o_u,t_u) : u\in V\}$. Define the relation~$\sim_0$
such that $t_u\sim_0 o_v$ if and only if $(u,v)\in A$. Let $\sim$
be the minimal equivalence relation on~$V'$ containing~$\sim_0$. Set
$\delta^{-1}G=(V',A')/{\sim}$. In the following, we use the notation
$V'(G)=V'$, $A'(G)=A'$ and $\sim_0$ and $\sim$ for the sets and relations
appearing in the definition of~$\delta^{-1}$; the precise meaning will
be clear from the context.
Now $\delta^{-1}$ is also an endofunctor in the category of digraphs.
Strictly speaking, it is not an inverse of $\delta$;
its name is chosen because of the following property.

\begin{prop}
\label{prop:delta}
For any digraphs $G$ and $H$,
\[G\to\delta H \qquad\text{if and only if}\qquad \delta^{-1}G\to H.\]
\end{prop}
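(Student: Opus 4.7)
The plan is to exhibit mutually inverse constructions between homomorphisms $f\colon G\to\delta H$ and homomorphisms $g\colon\delta^{-1}G\to H$, making essential use of the way the vertices $o_u,t_u$ of $V'(G)$ are intended to encode the tail and head of the arc corresponding to~$u$.

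For the forward direction, given $f\colon G\to\delta H$, each vertex $u\in V(G)$ is sent to an arc of~$H$, which I denote $f(u)=(a_u,b_u)$. The hypothesis that $f$ preserves arcs means that whenever $(u,v)\in A(G)$, the pair $\bigl((a_u,b_u),(a_v,b_v)\bigr)$ lies in $\delta A(H)$, which by definition of the arc graph forces $b_u=a_v$. I would then define a map $\tilde g\colon V'(G)\to V(H)$ by $\tilde g(o_u)=a_u$ and $\tilde g(t_u)=b_u$. The equality $b_u=a_v$ says exactly that $\tilde g$ respects~$\sim_0$, hence also the equivalence~$\sim$ it generates, so $\tilde g$ descends to a well-defined map $g$ on $V(\delta^{-1}G)=V'(G)/{\sim}$. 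Every arc of $\delta^{-1}G$ arises from some $(o_u,t_u)\in A'(G)$, and it maps under $g$ to $(a_u,b_u)\in A(H)$, so $g$ is a homomorphism.

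For the reverse direction, given $g\colon\delta^{-1}G\to H$, define $f\colon V(G)\to A(H)=V(\delta H)$ by $f(u)=\bigl(g([o_u]),g([t_u])\bigr)$; this really is an arc of~$H$ because $([o_u],[t_u])$ is an arc of $\delta^{-1}G$ (it comes from $(o_u,t_u)\in A'(G)$). For an arc $(u,v)\in A(G)$ the defining relation $t_u\sim_0 o_v$ gives $[t_u]=[o_v]$, so $g([t_u])=g([o_v])$, and therefore $\bigl(f(u),f(v)\bigr)=\bigl((g([o_u]),g([t_u])),(g([t_u]),g([t_v]))\bigr)$, which is an arc of $\delta H$ because both coordinates are arcs of~$H$.

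Finally one observes that the two constructions are mutually inverse almost by inspection, which completes the biconditional. The main obstacle is essentially bookkeeping: one has to be scrupulous about the distinction between an element of $V'(G)$ and its $\sim$-class, and one has to check that when defining $g$ on the quotient only $\sim_0$ needs to be verified (since $\sim$ is the equivalence it generates). No genuine combinatorial difficulty arises; the statement is in fact the adjunction identity witnessing that $\delta^{-1}$ is left adjoint to~$\delta$.
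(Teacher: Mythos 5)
Your proof is correct and follows essentially the same route as the paper's: in the forward direction you decompose $f(u)=(a_u,b_u)$, define the map on $V'(G)$ by $o_u\mapsto a_u$, $t_u\mapsto b_u$, check compatibility with $\sim_0$ (hence $\sim$), and pass to the quotient; in the reverse direction you define $f(u)=(g([o_u]),g([t_u]))$ and use $[t_u]=[o_v]$ to verify that arcs are preserved. The closing remark about the two constructions being mutually inverse is not needed for the biconditional (and is the stronger hom-set bijection version of the adjunction), but it does not weaken the argument.
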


\begin{proof}
Let $f: G \to \delta H$ be a homomorphism.  Then there exist two
homomorphisms $o, t: G \to H$ such that $f(u) = (o(u), t(u))$ for all $u
\in V(G)$.  Define the mapping $\hat g : V'(G) \to V(H)$ by $\hat g(o_u)
= o(u)$ and $\hat g (t_u) = t(u)$.  If $t_u \sim_0 o_v$, then $(u,v)
\in A(G)$, whence $(f(u),f(v)) \in A(\delta H)$ and thus $t(u) = o(v)$.
Therefore $\hat g$~is constant on the equivalence classes of~$\sim$, and it
induces a homomorphism from ${A'(G)/{\sim}} = \delta^{-1}G$ to~$H$.

Conversely, let $g: \delta^{-1}G \to H$ be a homomorphism.
We define $f: V(G) \to V(\delta H)$ by 
$f(u) = (g(o_u/{\sim}), g(t_u/{\sim}))$.
If $(u,v) \in A(G)$, then  $t_u/{\sim} = o_v/{\sim}$,
whence $(f(u), f(v)) \in A(\delta H)$. Therefore
$f$~is a homomorphism.
\end{proof}

Thus $\delta$ and $\delta^{-1}$ are Galois adjoints%
\footnote{Let $X$ and $Y$ be partially ordered sets. Mappings
$\phi:X\to Y$ and $\psi:Y\to X$ are \emph{Galois adjoints} if $\phi(x)\le_{Y}y
\,\Leftrightarrow\, x\le_X \psi(y)$ for all elements $x\in X$ and $y\in Y$.}
with respect to
the ordering by existence of homomorphisms. They are in fact adjoint
functors in the category of digraphs and homomorphisms. We return to this
topic in Section~\ref{sec:functors}. For the moment we aim to prove 
that $\delta$ preserves tree duality. More precisely, from the 
family~${\cal T}$ of tree obstructions of~$H$, we will derive the 
family $\Sproink({\cal T})$ of tree obstructions of~$\delta H$.

The \emph{algebraic height} of an oriented tree~$T$ is the minimum number of
arcs of a directed path to which $T$~maps homomorphically.  
The algebraic height of every finite oriented tree is well-defined and finite, 
since every such tree admits a homomorphism to some finite directed path. 
Thus a tree~$T$ is \emph{of height at most one} if its vertex set 
can be split into two parts $0_T, 1_T$ in such a way that for every 
arc~$(x,y)$ of~$T$ we have $x \in 0_T$ and $y \in 1_T$.  
Note that if the tree~$T$ has no arcs, then it has only one vertex
and thus one of the sets $0_T$, $1_T$ is empty and the other one is
a singleton.

Let $T$ be a tree. For every vertex~$u$ of~$T$, let $F(u)$ be a tree
of height at most one. For each arc~$e$ of~$T$ incident with~$u$, let
there be a fixed vertex $v(e,F(u))$ in~$F(u)$ such that if $u$~is the
initial vertex of~$e$, then $v(e,F(u))\in 1_{F(u)}$, and if $u$~is the
terminal vertex of~$e$, then $v(e,F(u))\in 0_{F(u)}$.%
\footnote{It follows that if $u$~is neither a source nor a sink of~$T$,
then both $0_{F(u)}$ and $1_{F(u)}$ are non-empty, and so in this case
$F(u)$~is not a single vertex. If $u$ is a source or a sink of~$T$,
then $F(u)$ may be an arbitrary tree of height at most one.}
A tree~$S$ is now constructed by taking all the trees~$F(u)$ for all
vertices~$u$ of~$T$, and by identifying the vertex $v(e,F(u))$ with
$v(e,F(u'))$ whenever $e=(u,u')$ is an arc of~$T$.

Any such tree~$S$ constructed from~$T$ by the above procedure is called
a \emph{sproink} of~$T$. The set of all sproinks of a tree~$T$ is denoted
by $\Sproink(T)$. The following lemma asserts that sproinks 
of obstructions for a template~$H$ are indeed obstructions 
for its arc graph~$\delta H$.

\begin{lemma}
\label{lem:sproink}
Let $T$ be a tree and $H$ a digraph such that $T\notto H$.
If $S\in\Sproink(T)$, then $S\notto \delta H$.
\end{lemma}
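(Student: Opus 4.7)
The plan is to reduce to Proposition~\ref{prop:delta} by constructing a homomorphism $\phi\colon T\to\delta^{-1}S$. Once such a $\phi$ is in hand, any hypothetical homomorphism $g\colon\delta^{-1}S\to H$ would compose with $\phi$ to yield $T\to H$, contradicting the hypothesis $T\notto H$; hence $\delta^{-1}S\notto H$, which by Proposition~\ref{prop:delta} is equivalent to the desired $S\notto\delta H$.

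To construct $\phi$, I exploit that for each arc $e=(u,u')$ of $T$ the shared vertex $v_e:=v(e,F(u))=v(e,F(u'))$ of $S$ gives rise to the arc $(o_{v_e}/{\sim},\,t_{v_e}/{\sim})$ of $\delta^{-1}S$. The natural attempt is therefore to set $\phi(u)$ equal to $o_{v_e}/{\sim}$ whenever $u$ is the initial vertex of some incident arc~$e$, and equal to $t_{v_e}/{\sim}$ whenever $u$ is the terminal vertex of some incident arc~$e$. If this assignment is well-defined, the homomorphism property is immediate from the definition of~$\delta^{-1}S$.

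The heart of the argument---and the step I expect to be the main obstacle---is establishing well-definedness. Whenever $F(u)$ has at least one arc, I expect that the restriction of $\sim_0$ to the arcs of $F(u)$ alone already identifies the whole set $\{t_w:w\in 0_{F(u)}\}\cup\{o_w:w\in 1_{F(u)}\}$ into a single $\sim$-class: under the bijection $t_w\leftrightarrow w\leftrightarrow o_w$, this bipartite identification is isomorphic to the underlying undirected graph of~$F(u)$, which is a connected tree. Since every $v_e$ incident to $u$ then lies in the appropriate part of $F(u)$ and is non-isolated there, the corresponding $o_{v_e}$ or $t_{v_e}$ lies in this common class, and $\phi(u)$ is unambiguous. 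The degenerate case where $F(u)$ is a single vertex is, by the footnote, possible only when $u$ is a source or sink of~$T$, so all incident $v_e$ literally coincide and consistency is trivial. Once this structural analysis of the $\sim$-classes is in place, the lemma follows by the composition argument above.
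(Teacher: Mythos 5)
Your proposal is correct and follows essentially the same strategy as the paper's own proof: construct a homomorphism $T\to\delta^{-1}S$ by assigning to each vertex $u$ the $\sim$-class determined by the tree $F(u)\subseteq S$, then invoke Proposition~\ref{prop:delta}. Your treatment of well-definedness is, if anything, a bit more explicit than the paper's — the paper asserts ``whenever $x\in0_{F(u)}$ and $y\in1_{F(u)}$, then $t_x\sim o_y$'' without spelling out that this relies on the connectivity of $F(u)$ as a tree, whereas you make the bipartite-identification-is-isomorphic-to-$F(u)$ argument explicit, which is the right justification.
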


\begin{proof}
We prove that $T\to\delta^{-1}S$. Consequently $\delta^{-1}S\notto H$ 
because $T\notto H$, and therefore $S\notto\delta H$ by
Proposition~\ref{prop:delta}.

Thus let $S\in\Sproink(T)$. For a vertex~$u$ of~$T$, consider the
tree~$F(u)$, which is a subgraph of~$S$. Since $F(u)$~has height
at most one, its vertices are partitioned into the sets $0_{F(u)}$
and~$1_{F(u)}$. The set~$V'(S)$, which appears in the definition
of~$\delta^{-1}S$, contains~$V'(F(u))$ as a subset. If $(x,y)$ is an
arc of~$F(u)$, then $t_x\sim_0 o_y$. Thus whenever $x\in0_{F(u)}$ and
$y\in 1_{F(u)}$, then $t_x\sim o_y$. Hence for any vertex~$u$ of~$T$
there exists a unique vertex~$f(u)$ of~$\delta^{-1}S$ that is equal
to $t_x/{\sim}$ for all $x\in 0_{F(u)}$ and to $o_y/{\sim}$ for all
$y\in 1_{F(u)}$.

In this way, we have defined a mapping $f: V(T) \to V(\delta^{-1}S)$.

Now assume that $e=(u,v)$ is an arbitrary arc of $T$. Then the vertex
$v(e,F(u))$, which belongs to~$1_{F(u)}$, has been identified with
$v(e,F(v))$, which belongs to~$0_{F(v)}$. Let this identified vertex
be~$x$; it is a vertex of~$S$. By definition, $f(u)=o_x/{\sim}$ because
$x\in1_{F(u)}$, and $f(v)=t_x/{\sim}$ because $x\in 0_{F(v)}$. Of
course $(o_x/{\sim},t_x/{\sim})\in A(\delta^{-1}S)$. Therefore $f:
T\to\delta^{-1}S$ is a homomorphism, as we have promised to prove.
\end{proof}

For a set~$\F$ of trees, let $\Sproink(\F)= \bigcup_{T\in\F}\Sproink(T)$.

\begin{theorem}
\label{thm:spr-thm}
Let $\F$ be a set of trees which is a complete set of obstructions for
a template~$H$. Then $\Sproink(\F)$ is a complete set of obstructions
for~$\delta H$.
\end{theorem}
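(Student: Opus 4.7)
The plan is to prove both implications of the claimed equivalence separately. The forward direction follows directly from Lemma~\ref{lem:sproink} by composition: if $G \to \delta H$ and some $S \in \Sproink(T)$ with $T \in \F$ admitted a homomorphism $S \to G$, then $S \to \delta H$, contradicting Lemma~\ref{lem:sproink} (which applies because $T \notto H$ by completeness of $\F$).

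The reverse direction is the substantive one. Assume no $S \in \Sproink(\F)$ maps to $G$. By Proposition~\ref{prop:delta} it suffices to show $\delta^{-1} G \to H$, and by completeness of $\F$ for $H$ this reduces to showing no $T \in \F$ admits a homomorphism to $\delta^{-1} G$. Contrapositively, the task becomes: whenever $g: T \to \delta^{-1} G$ is a homomorphism, some $S \in \Sproink(T)$ admits a homomorphism to $G$. This is the technical heart of the theorem.

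To construct such a sproink, first choose for each arc $e = (u, v)$ of $T$ a vertex $w_e \in V(G)$ witnessing $(g(u), g(v)) \in A(\delta^{-1} G)$, so that $o_{w_e} \in g(u)$ and $t_{w_e} \in g(v)$. Next, for each vertex $u$ of $T$, consider the bipartite graph on the equivalence class $g(u) \subseteq V'(G)$ whose two sides are the $t$-vertices and $o$-vertices of the class, with an undirected edge between $t_a$ and $o_b$ exactly when $(a, b) \in A(G)$ (i.e.\ when $t_a \sim_0 o_b$). Since $g(u)$ is a single $\sim$-class, this bipartite graph is connected; let $F(u)$ be a spanning tree, oriented so that every arc goes from the $t$-side (taken as $0_{F(u)}$) to the $o$-side (taken as $1_{F(u)}$). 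Then $F(u)$ is a tree of height at most one, and we set $v(e, F(u)) = o_{w_e}$ when $u$ is the initial vertex of $e$ and $v(e, F(u)) = t_{w_e}$ when $u$ is the terminal vertex; the choices lie in the correct side by construction.

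Gluing the $F(u)$'s according to the sproink construction yields $S \in \Sproink(T)$, and the maps $h_u : F(u) \to G$ defined by $t_a \mapsto a$ and $o_b \mapsto b$ are homomorphisms because every arc of $F(u)$ arises from an arc of $G$. For any arc $e = (u, u')$ of $T$ we have $h_u(v(e, F(u))) = w_e = h_{u'}(v(e, F(u')))$, so the $h_u$'s agree on identified vertices and glue to a homomorphism $h: S \to G$. The delicate point to address is the degenerate situation where $g(u)$ contains only $t$-vertices or only $o$-vertices: a short $\sim_0$-chain argument shows such a class must be a singleton, so $F(u)$ is the unique required vertex, consistent with the freedom the sproink definition allows at sources and sinks of $T$.
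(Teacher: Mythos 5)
Your proposal is correct and follows essentially the same route as the paper's proof: reduce via Proposition~\ref{prop:delta} and completeness of~$\F$ to the claim that a homomorphism $T\to\delta^{-1}G$ yields a sproink of~$T$ mapping to~$G$, then build $F(u)$ from the $\sim$-class $f(u)$ and glue along witnesses for each arc of~$T$. The only difference is one of explicitness: where the paper simply asserts that ``by the definition of~$\sim$ as the least equivalence containing~$\sim_0$, there exists a tree~$F(u)$ of height at most one and a homomorphism $g_u:F(u)\to G$'' with the stated images, you spell this out by taking a spanning tree of the connected bipartite $\sim_0$-graph on the class $g(u)$, and you handle the singleton (source/sink) case separately. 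Both are valid; yours makes the implicit combinatorics visible.
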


\begin{proof}
Lemma~\ref{lem:sproink} implies that $\Sproink(\F)$ is a set of
obstructions for~$\delta H$. It remains to prove that it is complete, that
is whenever $G\notto\delta H$, then there exists some $S\in\Sproink(\F)$
such that $S\to G$.

So let $G\notto\delta H$. Thus by Proposition~\ref{prop:delta} we have
$\delta^{-1}G\notto H$.  Hence there exists a tree $T\in\F$ such that
$T\to \delta^{-1}G$, because $\F$~is a complete set of obstructions
for~$H$. Consequently it suffices to prove that if $T\to\delta^{-1}G$
then there exists $S\in\Sproink(T)$ such that $S\to G$.

Thus assume that $f:T\to\delta^{-1}G$ is a homomorphism. For every $u
\in V(T)$, the image~$f(u)$ is a $\sim$-equivalence class; put
\begin{align*}
1_{u} & = \{ y \in V(G) : o_y \in f(u)\},\\
0_{u} & = \{ x \in V(G) : t_x \in f(u)\}.
\end{align*}

Then $f(u) = 1_{u} \cup 0_u$, and by the definition of~$\sim$ as the least
equivalence containing~$\sim_0$, there exists a tree~$F(u)$ of height at
most one and a homomorphism $g_u: F(u) \to G$ such that $g_u(0_{F(u)})
= 0_{u}$ and  $g_u(1_{F(u)}) = 1_{u}$.  For every arc~$(u,v)$ of~$T$,
we have $(f(u), f(v)) \in A(\delta^{-1}G)$ so there exists $x \in V(G)$
such that $o_x \in f(u)$ and $t_x \in f(v)$.

We then select $y \in 1_{F(u)}$ and $z \in 0_{F(v)}$ such that
$g_u(y) = g_v(z) = x$, and identify them.  Proceeding with all such
identifications, we construct a tree $S \in \Sproink(T)$ such that
$g = \bigcup_{u \in V(T)} g_u : S \to G$ is a well-defined homomorphism.
\end{proof}

\begin{cor}
\label{cor:arc}
If a digraph $H$ has tree duality, then its arc graph~$\delta H$ also
has tree duality.
\qed
\end{cor}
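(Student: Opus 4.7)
The corollary follows with almost no extra work from Theorem~\ref{thm:spr-thm}. The plan is: fix a complete set of obstructions $\F$ for $H$ consisting of trees, which exists by the assumption of tree duality. Theorem~\ref{thm:spr-thm} then directly gives that $\Sproink(\F)=\bigcup_{T\in\F}\Sproink(T)$ is a complete set of obstructions for $\delta H$. The only remaining point is to check that every element of $\Sproink(\F)$ is itself a tree, so that this complete set of obstructions for $\delta H$ is a set of trees.

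This remaining verification is a structural observation about the sproink construction. A sproink $S$ of a tree $T$ is built by taking the disjoint union of trees $F(u)$ of height at most one, one per vertex $u$ of $T$, and then, for every arc $e=(u,u')$ of $T$, identifying the single vertex $v(e,F(u))\in 1_{F(u)}$ with the single vertex $v(e,F(u'))\in 0_{F(u')}$. Each $F(u)$ is connected and acyclic, and the amalgamation pattern is governed by the tree $T$: exactly one vertex-identification per arc of $T$, and none between $F(u)$ and $F(u')$ for non-adjacent $u,u'$.

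The only mildly technical step, and the one I expect to be the sole obstacle, is to confirm rigorously that this tree-indexed amalgamation of trees along single vertices yields a tree. A short induction on $|V(T)|$ handles it: removing a leaf $u'$ of $T$ (with its unique incident arc $e$) leaves a sproink $S'$ of $T-u'$ together with an additional tree $F(u')$ glued to $S'$ at the single vertex corresponding to $v(e,F(u'))$; gluing a tree to a tree at a single vertex preserves both connectedness and acyclicity. With this routine check in hand, $\Sproink(\F)$ is a complete set of tree obstructions for $\delta H$, so $\delta H$ has tree duality, as required.
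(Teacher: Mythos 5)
Your proposal is correct and follows exactly the route the paper intends: the corollary is an immediate consequence of Theorem~\ref{thm:spr-thm}, together with the observation that each sproink of a tree is itself a tree, which the paper treats as built into the definition (it calls $S$ a ``tree'' outright). Your short induction on $|V(T)|$, peeling off a leaf and gluing $F(u')$ to the remaining sproink at the single identified vertex, is a sound and welcome verification of that implicit step.
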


\begin{exa}
Consider $T=\vec P_4$, the directed path with four arcs, and its
dual~$D=\vec T_4$, the transitive tournament on four vertices. Here
$\delta D$~has six vertices, but its core%
\footnote{The {\em core} of a digraph is any of its smallest subgraphs to
which it admits a homomorphism. Every digraph~$H$ has a unique core~$C$
(up to isomorphism), which is moreover the only core homomorphically equivalent to it.
In fact, the core~$C$ of~$H$ is a \emph{retract} of~$H$, which means that
there exists a homomorphism $\rho:H\to C$ whose restriction on~$C$ is
the identity mapping (such a homomorphism is called a \emph{retraction}).}
is the directed path $\vec P_2$ with two arcs.  It is well known
that a directed graph $G$ admits a homomorphism to $\vec P_2$ if and
only if it does not admit a homomorphism from a ``thunderbolt'',
that is, an oriented path with two forward arcs at the beginning
and at the end, and with an odd-length alternating path between them
(see Fig.~\ref{fig:thunder}). Thus the family of all thunderbolts is a
complete set of tree obstructions for $\vec P_2$.

\begin{figure}
\begin{center}
\includegraphics{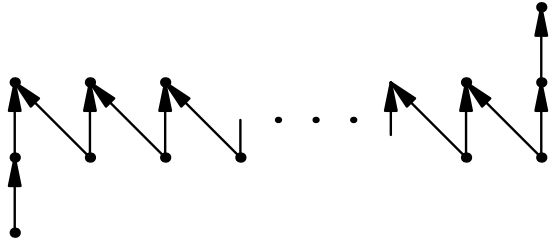}
\end{center}
\caption{A thunderbolt}
\label{fig:thunder}
\end{figure}

Our construction $\Sproink(T)$ gives all obstructions obtained by stacking
five trees $L_0$, $L_1$, $L_2$, $L_3$,~$L_4$ of height at most one, with
one top vertex of~$L_i$ identified with one bottom vertex of~$L_{i+1}$ for
$i = 0, 1, 2, 3$.  The example of thunderbolts shows that in fact $L_0$
can be restricted to a single (top) vertex, and $L_4$~can be restricted
to a single (bottom) vertex. The same holds for leaves of
general trees. Also,  $L_1$, $L_2$, $L_3$ can be restricted to \emph{paths}
of height one, and it is also true in general that it is sufficient to
consider sproinks obtained by replacing vertices by \emph{paths} of height at
most one. In fact the name ``sproink'' is inspired by picturing such a
path springing out of every non-leaf of~$T$.
\end{exa}

The results of this section show that we can construct an interesting
class of templates with tree duality by repeatedly applying the arc-graph
construction to digraphs with finite duality.  Moreover, if templates
$H_1$, $H_2$,~\dots,~$H_k$ all have tree duality, then also their product
$H_1\times H_2\times\dotsb\times H_k$ has tree duality as the union of
the respective complete sets of obstructions of the factors is a complete
set of obstructions for the product.  The resulting class of templates
is subject to examination in the next section.

\section{Finite duality} \label{sec:fd}

Following \cite{NesTar:Dual}, every tree $T$ admits a dual $D(T)$ such
that for every digraph~$G$, we have $G \rightarrow D(T)$ if and only
if $T \notto G$. A digraph~$H$ has finite duality if and only if it is
homomorphically equivalent to a finite product of duals of trees.

In this section, we consider the class $\dpc$, the smallest class of
digraphs that contains all duals of trees and is closed under taking
arc graphs, finite products and homomorphically equivalent digraphs.
It follows from Corollary~\ref{cor:arc} that all elements of~$\dpc$
have tree duality. Moreover we know how to construct a complete set
of obstructions for each of these templates, using iterated $\Sproink$
constructions and unions. The question then arises as to how significant
the class~$\dpc$ is within the class of digraphs with tree duality. It
turns out that the digraphs in~$\dpc$ have properties that are not shared
by all digraphs with tree duality.

A digraph~$H$ has \emph{bounded-height tree duality} provided there
exists a constant~$m$ such that $H$~admits a complete set of obstructions
consisting of trees of algebraic height at most~$m$.

\begin{prop}
\label{prop:dpc}
\begin{itemize}
\item[(i)] Every core in $\dpc$ admits a near-unanimity function.
\item[(ii)] Every member of $\dpc$ has bounded-height tree duality.
\end{itemize}
\end{prop}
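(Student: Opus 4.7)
My plan is to handle both items by structural induction on how a digraph $H\in\dpc$ is built up: verify each property for duals of trees, then check that it is preserved under the three closure operations defining $\dpc$, namely finite products, the arc-graph functor $\delta$, and passage to homomorphic equivalents.

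For part~(i), I will first invoke the known fact that every dual of a tree admits a near-unanimity function (in fact a ternary majority). The product step is then straightforward: after padding all NUFs to a common arity (using that a $k$-ary NUF extends to a $(k+1)$-ary one, for instance by dropping its first coordinate), the coordinatewise operation is a NUF on the product. The arc-graph step is the key point: given a $k$-ary NUF $f:H^k\to H$, I will set
\[F\bigl((a_1,b_1),\dotsc,(a_k,b_k)\bigr) = \bigl(f(a_1,\dotsc,a_k),\,f(b_1,\dotsc,b_k)\bigr)\]
and verify that (a)~since each $(a_i,b_i)\in A(H)$ and $f$ is a homomorphism, the output lies in $A(H)$, so $F$ takes values in $\delta H$; (b)~arcs of $(\delta H)^k$ force $b_i=a'_i$ coordinatewise, which makes $F$ a homomorphism; and (c)~the NUF identities for $F$ pass componentwise from those of $f$. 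Conceptually this is the explicit shadow of the general fact, foreshadowed by Proposition~\ref{prop:delta}, that $\delta$ is a right adjoint and thus preserves finite products. Finally, any NUF on $H$ conjugates through a retraction $H\to H^*$ to a NUF on its core $H^*$.

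For part~(ii), the induction tracks a bound $m(H)$ on the algebraic height of some complete set of tree obstructions for $H$. For $D(T)$ take $m=\mathrm{height}(T)$, since $\{T\}$ is itself a complete obstruction set. For a product, the union of complete tree obstruction sets of the factors is still complete and the bound becomes their maximum. For $\delta H$, Theorem~\ref{thm:spr-thm} supplies $\Sproink(\F)$ as a complete obstruction set; it remains to show that each $S\in\Sproink(T)$ has algebraic height at most $\mathrm{height}(T)+1$. Given a height-realizing $h:T\to\vec P_m$ with $m=\mathrm{height}(T)$, I will define $h':S\to\vec P_{m+1}$ by
\[h'(x) = \begin{cases} h(u) & \text{if } x\in 0_{F(u)},\\ h(u)+1 & \text{if } x\in 1_{F(u)},\end{cases}\]
which is consistent at each identification $v(e,F(u))=v(e,F(v))$ along an arc $e=(u,v)$ of $T$ because $h(v)=h(u)+1$, and which is a homomorphism because every arc of $S$ lies in some $F(u)$ and runs from $0_{F(u)}$ to $1_{F(u)}$. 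Thus each application of $\Sproink$ raises the height bound by at most one, so after finitely many operations it stays finite.

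The main obstacle is the base case of~(i): one must invoke the (non-trivial) external result that duals of trees admit near-unanimity functions. Everything else reduces to direct verifications---for the inductive step on NUFs, the explicit formula defining $F$; for the inductive step on bounded-height tree duality, the one-level leveling $h'$, which increments the height bound by exactly one per application of the arc-graph functor.
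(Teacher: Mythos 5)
Your proof is correct and follows essentially the same route as the paper's: cite the NUF result for structures with finite duality, then show closure of the NUF property under retraction to the core, finite products (after padding arities), and the arc-graph functor via the coordinatewise formula; and for~(ii), track the height bound through products and one application of $\delta$. The only addition is that you make explicit, via the levelling map $h'$, why each sproink of a tree of height~$m$ has height at most~$m+1$, a fact the paper asserts but does not spell out.
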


\begin{proof} (i): By Corollary~4.5 of~\cite{Tar:FOD}, 
every structure with finite duality admits a near-unanimity function.
Therefore it suffices to show that the class of structures
admitting a near-unanimity function is closed under taking
cores, finite products and the arc-graph construction. 

Let $C$ be the core of~$H$, $\rho: H \to C$ a retraction 
and $f:H^k\to H$ a near-unanimity function. 
Since $C$~is an induced subgraph of~$H$, the restriction
$\rho \circ f\restrict C^k$ is a near-unanimity function on~$C$.

Suppose $f_i:H_i^{k_i}\to H_i, i = 1, \ldots, m$ are near-unanimity
functions. For $k = \max\{k_i : i = 1, \ldots, m\}$, we define
$k$-ary near-unanimity functions $g_i: H_i^k \to H_i$ by
$g_i(x_1, \ldots, x_k) = f_i(x_1, \ldots, x_{k_i})$.
For $H = \Pi_{i = 1}^m H_i$ we then define a near-unanimity
function $g: H^k \to H$ coordinate-wise, by putting
\[g((x_{1,1},\ldots, x_{m,1}), \ldots, (x_{1,k}, \ldots, x_{m,k}))
= (g_1(x_{1,1},\ldots,x_{1,k}), \ldots, g_m(x_{m,1},\ldots, x_{m,k})).\]

Now suppose that $f: H^k \to H$ is a near-unanimity function.  Then $\left
( \delta H \right )^k$ is naturally isomorphic to~$\delta(H^k)$, and we
define $g: \left ( \delta H \right )^k \to \delta H$ by \[g((u_1,v_1),
\ldots, (u_k,v_k)) = (f(u_1,\ldots, u_k),f(v_1,\ldots,v_k)).\] The fact
that $f$ is a homomorphism implies that $g$~is well defined, and $g$~is
a homomorphism by the definition of adjacency in~$\delta H$.  Also, $g$
clearly satisfies the near-unanimity identities, so it is a near-unanimity
function on~$\delta H$.

\bigskip

(ii): The class of digraphs with bounded-height tree duality is obviously
preserved by taking cores and finite products. By Theorem~\ref{thm:spr-thm}, if
$H$~has a complete set of obstructions consisting of trees of algebraic
height at most~$k$, then $\delta H$~has a complete set of obstructions
consisting of trees of algebraic height at most~${k+1}$, so the class
of digraphs with bounded-height tree duality is also preserved by the
arc-graph construction.
\end{proof}

We know a core digraph with tree duality but no near-unanimity function
and no bounded-height tree duality. (The example is complicated and out of
the scope of this paper, therefore we omit it.)  Thus the class~$\dpc$
does not capture all core digraphs with tree duality. The problem
of generating all structures with tree duality by means of suitable
functors applied to structures with finite duality remains nevertheless
interesting.

Membership in $\dpc$ is not known to be decidable. In the remainder
of this section, we show that bounded-height tree duality is decidable.

Given a digraph $H$, the \emph{$n$-th crushed cylinder}~$H_n^*$
is the quotient $(H^2\times P_n) / {\simeq_n}$, where
$P_n$~is the path with arcs 
$(0,0),(0,1), (1,2), \cdots, (n-1,n), (n,n)$,
and $\simeq_n$ is the equivalence defined by
\[
(u,v,i) \simeq_n (u',v',j) \Leftrightarrow
\begin{cases}
\text{$i = j = 0$ and $u = u'$}, \\
\text{or $i = j = n$ and $v = v'$}, \\
\text{or $(u,v,i) =(u',v',j)$}.
\end{cases}
\]

\begin{theorem}
\label{thm:123}
For a core digraph $H$ with tree duality, the following
are equivalent:
\begin{itemize}
\item[(1)] $H$ has bounded-height tree duality,
\item[(2)] For some $n$ we have $H_n^* \to H$. 
\item[(3)] There exists a directed (upward) path from
the first projection to the second in $H^{H^2}$.
\end{itemize}
\end{theorem}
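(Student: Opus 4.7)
My plan is to establish $(2) \Leftrightarrow (3)$ via the standard exponential--product adjunction in the category of digraphs, and then to relate these to~$(1)$ through tree duality together with a lifting argument (for $(1) \Rightarrow (2)$) and a folding argument (for $(3) \Rightarrow (1)$).

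For $(2) \Leftrightarrow (3)$: a homomorphism $\phi\colon H^2 \times P_n \to H$ corresponds under the adjunction $\mathrm{Hom}(H^2 \times X, H) \cong \mathrm{Hom}(X, H^{H^2})$ to a sequence $f_0, f_1, \dots, f_n$ of maps $H^2 \to H$, where $f_i(u,v) = \phi(u,v,i)$, with $f_i \to f_{i+1}$ in $H^{H^2}$ for each arc of~$P_n$. The loops of~$P_n$ at $0$ and $n$ make $f_0$ and $f_n$ loop-vertices of~$H^{H^2}$. Factoring through~$\simeq_n$ is equivalent to $f_0(u,v)$ being independent of~$v$ and $f_n(u,v)$ being independent of~$u$, that is, $f_0 = h_0 \circ \pi_1$ and $f_n = h_n \circ \pi_2$ for some endomorphisms $h_0, h_n$ of~$H$. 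Because $H$ is a core, both $h_0$ and $h_n$ are automorphisms; applying automorphisms of~$H^{H^2}$ induced by $h_0^{-1}$ and $h_n^{-1}$ then converts between the data of such a homomorphism and a directed walk in~$H^{H^2}$ with endpoints exactly $\pi_1$ and~$\pi_2$.

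For $(3) \Rightarrow (1)$, I use the interpolation walk $\pi_1 = f_0, f_1, \dots, f_n = \pi_2$ to show that $H$ has a complete set of obstructions of height at most~$n+1$. Given any tree~$T$ with $T \notto H$, a folding procedure produces a tree of height at most~$n+1$ which admits a homomorphism from~$T$ and still fails to map to~$H$: long directed segments of~$T$ are replaced by height-$(n+1)$ gadgets built from the $f_i$'s along~$P_n$, and a hypothetical homomorphism of the folded tree into~$H$ unfolds (using the walk) to one of the original. For $(1) \Rightarrow (2)$, assuming obstructions of height at most~$m$, take $n$ sufficiently large in terms of~$m$ and~$|H|$; if $H_n^* \notto H$, tree duality supplies a tree~$T$ of height at most~$m$ with $T \to H_n^*$ and $T \notto H$. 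I lift this homomorphism to $T \to H^2 \times P_n$ inductively along~$T$, choosing representatives at the crushed layers $i \in \{0, n\}$, and compose with the first-coordinate projection to obtain $T \to H$, contradicting $T \notto H$.

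The main obstacle is the tree-lifting in the $(1) \Rightarrow (2)$ direction: at crushed-layer vertices there are many potential representatives, but the arc constraints coming from all edges of~$T$ incident to such a vertex must be satisfied simultaneously. The crucial input is that the level function $V(T) \to V(P_n)$ inherited from $H_n^* \to P_n$, together with~$T$ having algebraic height at most~$m$, controls how crushed-layer vertices can appear in~$T$ and allows the inductive choice to succeed. Subsidiary issues are the automorphism bookkeeping at the end of the $(2) \Leftrightarrow (3)$ argument, needed to pin the walk's endpoints exactly at~$\pi_1, \pi_2$, and the folding construction in $(3) \Rightarrow (1)$, where branching obstruction trees must be contracted coherently.
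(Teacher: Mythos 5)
Your plan for $(2)\Leftrightarrow(3)$ via the exponential adjunction, including the automorphism bookkeeping forced by $H$ being a core, matches the spirit of the paper (which dismisses this equivalence as immediate); that part is sound and if anything more careful than the paper's one-line remark.

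The other two implications diverge genuinely from the paper, and both have problems. For $(1)\Rightarrow(2)$ the paper argues much more directly: the two subgraphs of $H_n^*$ obtained by deleting the crushed level-$0$ end and the crushed level-$n$ end each admit a homomorphism to $H$ (via the second-coordinate, respectively first-coordinate, projection, which descends to the quotient on the relevant end). Hence any tree obstruction mapping to $H_n^*$ must hit both crushed ends, forcing its algebraic height to be at least $n$; with $n>m$ this is a contradiction, so $H_n^*\to H$. Your lifting scheme tries to reconstruct a global section of the quotient $H^2\times P_n\to H_n^*$ on the image of $T$ and then project. The lift cannot exist in general (otherwise $H_n^*\to H$ would hold unconditionally); you acknowledge this, but the invocation of ``the level function together with bounded height'' is not an argument. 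What actually saves the day is exactly the paper's observation -- bounded height means the image of $T$ avoids at least one of the two crushed ends, after which a single coordinate projection already gives $T\to H$ without any inductive choice of representatives. Your phrasing ``compose with the first-coordinate projection'' also conceals a case distinction, since the correct coordinate depends on which crushed end (if any) is touched.

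The more serious gap is in your $(3)\Rightarrow(1)$ folding argument, and it is a directionality problem. You propose to fold an obstruction tree $T$ (with $T\nrightarrow H$) to a bounded-height tree $S$ with $T\to S$ and $S\nrightarrow H$. But for the bounded-height trees to form a \emph{complete} set of obstructions you must show that whenever $G\nrightarrow H$ there is a bounded-height tree $S$ with $S\to G$ and $S\nrightarrow H$. Tree duality gives you $T$ with $T\to G$, $T\nrightarrow H$; your folding gives $T\to S$. These do not compose: $T\to S$ and $T\to G$ do not yield $S\to G$. For the argument to go through you would either need the folded tree to satisfy $S\to T$ (then $S\nrightarrow H$ fails to follow) or to fold $T$ compatibly with its image in $G$ (which you do not do, and which would not in general produce a tree). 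The paper sidesteps all of this by proving the contrapositive $\neg(1)\Rightarrow\neg(2)$: given a critical obstruction $T$ of algebraic length $n+2$, it uses the criticality of $T$ (the subtrees obtained by deleting the top or bottom level map to $H$) to explicitly construct a homomorphism $T\to H_n^*$, whence $H_n^*\nrightarrow H$. You should either adopt that contrapositive strategy, or repair the folding to produce a tree mapping into $G$ rather than receiving a map from $T$.
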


\begin{proof}
{\bf (1) $\Rightarrow$ (2):}
The two subgraphs obtained from $H_n^*$ by removing 
the two ends both admit homomorphisms to $H$. Therefore, if
a tree obstruction of $H$ admits a homomorphism to
$H_n^*$, its image must intersect the two ends hence its
algebraic length must be at least $n$.

\textbf{$\neg{}$(1) $\Rightarrow$ $\neg{}$(2):} 
Let $T$ be a critical obstruction of~$H$ of algebraic length~$n+2$.
Let $T_0$, $T_n$ be the subgraphs of~$T$ obtained by removing
the vertices of height $0$ and~$n+2$ respectively. Then there exists
homomorphisms $f_0: T_0 \to H$ and $f_n: T_n \to H$.
Let $h: T \to P_{n+2}$ be the height function of~$T$. We define
a map $f: T \to H_n^*$ by
\[
f(u) =
\begin{cases}
(f_n(u), f_0(u), h(u)-1) / {\simeq_n} &
	\text{if $h(u) \not \in \{0, n+2\}$,}\\
(f_n(u), f_n(u), 0)/ {\simeq_n}&
	\text{if $h(u) = 0$,}\\
(f_0(u), f_0(u), n)/ {\simeq_n}&
	\text{if $h(u) = n+2$.}
\end{cases}
\]
Let $(u,v)$ be an arc of $T$. Then $h(v) = h(u) + 1$. If 
$\{h(u), h(v) \} \cap \{ 0, n+2 \} = \emptyset$, we clearly have
$(f(u), f(v)) \in A(H_n^*)$. If $h(u) = 0$, then
$f(u) = (f_n(u), f_n(u), 0)/ {\simeq_n}$ 
is an in-neighbour of $(f_n(v), f_n(v), 0)/ {\simeq_n}
= (f_n(v), f_0(v), 0)/ {\simeq_n} = f(v)$,
and if $h(v) = n+2$, 
$f(v) = (f_0(v), f_0(v), n)/ {\simeq_n}$ 
is an out-neighbour of~$f(u)$ because
\[(f_0(u), f_0(u), n)/ {\simeq_n}
= (f_n(u), f_0(u), n)/ {\simeq_n} = f(u).\]
Therefore $f$ is a homomorphism.

\textbf{(2) $\Leftrightarrow$ (3):}
This equivalence follows easily from the definition.
\end{proof}

\begin{cor}
The problem whether an input digraph has bounded-height tree
duality is decidable.
\end{cor}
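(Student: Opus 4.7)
The plan is to translate the decision question into a finite reachability problem using Theorem~\ref{thm:123}, with the decidability of tree duality as a preliminary filter.

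Given an input digraph $H$, I would first compute its core $C$; this is a standard decidable procedure, and since $H$ and $C$ are homomorphically equivalent they admit identical complete sets of obstructions, so $H$ has bounded-height tree duality if and only if $C$ does. Next, I would test whether $C$ has tree duality at all. This is a classical decidability result of Feder and Vardi: $C$ has tree duality if and only if the power digraph on non-empty subsets of $V(C)$ admits a homomorphism to $C$, which is a finite check. If this test fails, then bounded-height tree duality also fails (the latter implies tree duality), and the algorithm outputs ``no''.

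If $C$ does have tree duality, I would invoke condition~(3) of Theorem~\ref{thm:123}: $C$ has bounded-height tree duality if and only if the exponential digraph $C^{C^2}$ contains a directed path from the first projection to the second projection. The vertex set of $C^{C^2}$ consists of all maps $V(C)^2\to V(C)$, so it has at most $|V(C)|^{|V(C)|^2}$ vertices; it can be constructed explicitly, with adjacency checked coordinatewise. Directed reachability between two prescribed vertices of a finite digraph is decidable (in fact linear in its size), so this finishes the decision procedure. Equivalently, by the (2)$\Leftrightarrow$(3) part of the theorem, one may simply test $H_n^*\to H$ for every $n$ up to $|V(C)|^{|V(C)|^2}$.

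The main obstacle is essentially no obstacle: all of the combinatorial work has been absorbed into Theorem~\ref{thm:123}, and the corollary is the observation that its condition~(3) is witnessed or refuted by a finite computation. The only ingredient brought in from outside the paper is the decidability of tree duality itself, which is necessary because Theorem~\ref{thm:123} is stated under that hypothesis; without it, condition~(3) could in principle hold while bounded-height tree duality fails in a direction not covered by the theorem.
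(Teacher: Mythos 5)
Your proof is correct and follows essentially the same route as the paper's: first use the Feder--Vardi criterion (Theorem~\ref{thm:u}) to decide tree duality, then reduce bounded height to the finite reachability test of condition~(3) in Theorem~\ref{thm:123}. You are slightly more careful than the paper's terse proof in explicitly passing to the core so that Theorem~\ref{thm:123} applies, but this is a detail the paper leaves implicit rather than a genuinely different argument.
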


\begin{proof}
It is decidable whether a digraph has tree duality~\cite{FedVar:SNP}
(see Theorem~\ref{thm:u} below).  For a digraph with tree duality, bounded
height of the obstructions (the condition (1) of Theorem~\ref{thm:123})
is equivalent to the condition~(3), which involves directed reachability
in a finite graph. Hence bounded-height tree duality is decidable.
\end{proof}

\section{Adjoint functors and generation of tractable templates}
\label{sec:functors}

The correspondence of Proposition \ref{prop:delta} can be
extended to a wide class of functors presented in this
section. To illustrate this extension, we first redefine
$\delta$ in terms of patterns. Let $P$ be the digraph 
with vertices $0, 1$ and arc $(0,1)$, and $Q$ the digraph with
vertices $0, 1, 2$ and arcs $(0,1), (1,2)$. Furthermore
let $q_1, q_2: P \to Q$ be the homomorphisms mapping
the arc $(0,1)$ to $(0,1)$ and $(1,2)$ respectively.
For an arbitrary digraph $G$, its arc graph~$\delta G$ can be described as
follows: The vertices of $\delta G$ are the arcs of $G$,
that is, the homomorphisms $f: P \to G$. The arcs
of $\delta G$ are the couples of consecutive arcs in $G$,
that is, the couples $(f_1,f_2)$ such that there exists
a homomorphism $g: Q \to G$ satisfying
$g \circ q_1 = f_1$ and $g \circ q_2 = f_2$.
Thus the functor $\delta$ is generated
by the pattern $\{P, (Q,q_1,q_2))\}$ in a way that generalises
quite naturally.

The rest of this section deals with relational structures.
A \emph{vocabulary} is a finite set $\sigma = \{ R_1 , \dotsc , R_m
\}$ of relation symbols, each with an arity~$r_i$ assigned to it. A
$\sigma$-structure is a relational structure $A = \langle \bs A; R_1
(A), \dotsc , R_m (A)\rangle$ where $\bs A$ is a non-empty set called
the \emph{universe} of~$A$, and $R_i (A)$~is an $r_i$-ary relation
on~$\bs A$ for each~$i$.  Homomorphisms of relational structures
are relation-preserving mappings between universes; a homomorphism
is defined only between structures with the same vocabulary. Cores,
trees, quotient structures, etc.\ can also be defined in the context
of relational structures, consult~\cite{LoTar:mosfd} for the details
(see also~\cite{Hod:A-shorter-model,Tar:FOD}).  The notions of the
constraint satisfaction problem, template, and tree duality also carry
over naturally from the setting of digraphs.

Let $\sigma$ and $\tau$ be two vocabularies. Let $P$ be a
$\sigma$-structure, and for every relation $R$ of $\tau$ of arity $r =
a(R)$, let $Q_R$ be a $\sigma$-structure with $r$ fixed homomorphisms
$q_{R,i}: P \to Q_R$ for $i = 1, \ldots, r$.
Then the family
$\{ P \} \cup \{ (Q_R, q_{R,1}, \ldots, q_{R,a(R)}) : R \in \tau \}$
defines a functor $\Psi$ from the category~$\mathcal{A}$
of $\sigma$-structures to the category~$\mathcal{B}$ of
$\tau$-structures as follows.
\begin{itemize}
\item For a $\sigma$-structure $A$, let $B = \Psi A$ be a
$\tau$-structure whose universe is the set of all 
homomorphisms $f: P \to A$.
\item For every relation $R$ of $\tau$ of arity $r = a(R)$,
let $R(B)$ be the set of $r$-tuples $(f_1, \ldots, f_r)$ such that
there exists a homomorphism $g: Q_{R} \to A$ such that for
$i = 1, \ldots, r$ we have $g \circ q_{R,i} = f_i$.
\end{itemize}

It was shown by Pultr~\cite{Pul:The-right-adjoints} that functors~$\Psi$
defined by means of patterns are right adjoints into a category of
relational structures characterised by axioms of a specific type. We
exhibit their corresponding left adjoints~$\Psi^{-1}$ in the case when
both the domain and the range of~$\Psi$ is the category of all relational
structures with a given vocabulary.

For every $\tau$-structure $B$, we define
a $\sigma$-structure $\Psi^{-1} B  = A/{\sim}$, where
\begin{itemize}
\item $A$ is a disjoint union of $\sigma$-structures;
for every element $x$ of the universe of $B$, $A$
contains a copy $P_x$ of $P$, and for every $R \in \tau$
and $(x_1, \ldots, x_r) \in R(B)$, $A$ contains 
a copy $Q_{R,(x_1, \ldots, x_r)}$ of $Q_R$.
\item $\sim$ is the least equivalence which identifies
every element $u$ of $P_{x_i}$ with its image $q_{R,i}(u)$
in $Q_{R,(x_1, \ldots, x_r)}$, for every $R \in \tau$,
every $(x_1, \ldots, x_r) \in R(B)$ and every 
$i \in \{1, \ldots, r\}$. 
\end{itemize}

\begin{prop}[\cite{Pul:The-right-adjoints}]
\label{prop:Psi}
For any $\tau$-structure $B$ and $\sigma$-structure $A$,
\[B\to\Psi A  \qquad\text{if and only if}\qquad \Psi^{-1} B \to A.\]
\end{prop}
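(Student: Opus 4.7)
The plan is to imitate the proof of Proposition~\ref{prop:delta} in this more general setting; the adjunction is established by exhibiting explicit natural bijections between homomorphisms $B\to \Psi A$ and homomorphisms $\Psi^{-1}B\to A$. The extra bookkeeping is entirely carried by the indexing structure of patterns, and no genuinely new idea is required.

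For the forward direction, suppose $h: B \to \Psi A$ is given. I first define a homomorphism $\tilde h: A \to A$ on the disjoint union~$A$ (before quotienting), and then show that it factors through $\sim$. For each $x \in \bs B$, the value $h(x)$ is by definition a homomorphism $P \to A$, so I set $\tilde h$ on the copy $P_x$ equal to $h(x)$. For each $R \in \tau$ and each tuple $(x_1,\ldots,x_r) \in R(B)$, the fact that $h$~preserves $R$ gives $(h(x_1),\ldots,h(x_r)) \in R(\Psi A)$, which by the definition of~$\Psi$ supplies a homomorphism $g_{R,(x_1,\ldots,x_r)}: Q_R \to A$ with $g_{R,(x_1,\ldots,x_r)} \circ q_{R,i} = h(x_i)$ for every~$i$. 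Set $\tilde h$ on $Q_{R,(x_1,\ldots,x_r)}$ to equal this witness. The generating identifications of~$\sim$ take a point $u \in P_{x_i}$ to $q_{R,i}(u) \in Q_{R,(x_1,\ldots,x_r)}$, and the displayed equation is exactly what is needed to see that $\tilde h$ agrees on the two sides, so it descends to a homomorphism $\Psi^{-1}B \to A$.

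For the reverse direction, suppose $g: \Psi^{-1}B \to A$ is given. Define $h: \bs B \to \bs{\Psi A}$ by letting $h(x)$ be the composite of the canonical map $P \xrightarrow{\cong} P_x \hookrightarrow A \twoheadrightarrow \Psi^{-1}B$ with~$g$; this is a homomorphism $P \to A$ and hence an element of $\bs{\Psi A}$. To see that $h$ preserves each $R\in\tau$, fix $(x_1,\ldots,x_r) \in R(B)$. The canonical map $Q_R \to Q_{R,(x_1,\ldots,x_r)} \to \Psi^{-1}B$ composed with $g$ yields a homomorphism $Q_R \to A$, and the $\sim$-identifications force this composite to equal $h(x_i)$ when precomposed with $q_{R,i}$, witnessing $(h(x_1),\ldots,h(x_r)) \in R(\Psi A)$. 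A quick check shows that the two constructions are mutually inverse, so the stated equivalence holds.

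The only real obstacle is purely clerical: verifying that the generating relation of $\sim$ in the definition of $\Psi^{-1}B$ matches precisely the identities $g_{R,(x_1,\ldots,x_r)} \circ q_{R,i} = h(x_i)$ arising on the $\Psi A$~side. This is the exact analogue of the correspondence between $t_u\sim_0 o_v$ and the adjacency $(o(v),t(u))=(o(v),o(v))$ in the digraph proof, and reduces to a diagram chase with the patterns $(P,Q_R,q_{R,i})$.
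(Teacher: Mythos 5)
Your proof is correct and follows essentially the same route as the paper's: both directions build the map on the disjoint union of the pattern copies, then check compatibility with the quotient relation $\sim$ (forward) or read off the components by restricting along the canonical inclusions of $P_x$ and $Q_{R,(x_1,\ldots,x_r)}$ into the quotient (backward). The only cosmetic caveat is that you write $\tilde h: A\to A$ where $A$ momentarily denotes both the disjoint-union structure and the target --- a notational collision the paper itself risks; otherwise nothing is missing.
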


\begin{proof}
Let $h: B\to\Psi A $ be a homomorphism, and put $h(b) = f_b: P\to A$.
Then for every $b \in B$, the mapping~$f_b$ corresponds to a well-defined 
homomorphism to~$A$ from a copy~$P_b$ of~$P$. Also, for every
$R \in \tau$ and $(b_1, \ldots, b_r) \in R(B)$, we have
$(h(b_1), \ldots, h(b_r)) \in R(\Psi A )$, so there exists a homomorphism
$g_{(b_1, \ldots, b_r)}: Q_R\to A$ such that 
$f_{b_i} = g_{(b_1, \ldots, b_r)} \circ q_{R,i}$ for $i = 1, \ldots, r$;
the mapping $g_{(b_1, \ldots, b_r)}$ corresponds to a well-defined 
homomorphism from a copy $Q_{R,(b_1, \ldots, b_r)}$ of~$Q_R$ to~$A$.
Therefore $\bigcup_{b \in B} f_b \cup 
\bigcup_{\tau} \bigcup_{R(B)} g_{(b_1, \ldots, b_r)}$ corresponds to a 
well-defined homomorphism $\hat{h}: \bigcup_{b \in B} P_b \cup 
\bigcup_{\tau} \bigcup_{R(B)} Q_{R,(b_1, \ldots, b_r)} \to A$,
such that if $x \sim y$, then $\hat{h}(x) = \hat{h}(y)$.
Therefore $\hat{h}$~induces a homomorphism from the quotient structure
$\Psi^{-1} B $ to~$A$.

Conversely, if $h: \Psi^{-1} B \to A$ is a homomorphism,
we define a homomorphism $\hat{h}: B\to\Psi A $
by $\hat{h}(b) = f_b$, where $f_b$~corresponds to the 
restriction of~$h$ to the quotient of~$P_b$ in~$\Psi^{-1} B $.
Indeed, if $R \in \tau$ and $(b_1, \ldots, b_r) \in R(B)$, then the
restriction of~$h$ to the quotient of 
$Q_{R,(b_1, \ldots b_r)}$ in $\Psi^{-1} B $ corresponds to
a homomorphism $g: Q_R \to A$ such that 
$f_{b_i} = g \circ q_{R,i}$ for $i = 1, \ldots, r$, whence
$(\hat{h}(b_1), \ldots, \hat{h}(b_r)) \in R(\Psi A )$.
\end{proof}

\begin{cor}
If a $\sigma$-structure $A$ has polynomial CSP, 
then the $\tau$-structure $\Psi A $ also
has polynomial CSP.
\qed
\end{cor}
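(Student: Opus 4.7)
The plan is to use Proposition~\ref{prop:Psi} to reduce the CSP of $\Psi A$ to the CSP of $A$ via the left adjoint~$\Psi^{-1}$. Concretely, given an input $\tau$-structure~$B$, the algorithm for $\mathrm{CSP}(\Psi A)$ first computes $\Psi^{-1} B$ and then invokes the (assumed) polynomial-time algorithm for $\mathrm{CSP}(A)$ on $\Psi^{-1} B$. By Proposition~\ref{prop:Psi}, $B \to \Psi A$ if and only if $\Psi^{-1} B \to A$, so this correctly decides $\mathrm{CSP}(\Psi A)$.

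The only thing to verify is therefore that $\Psi^{-1} B$ can be built in polynomial time from~$B$, and in particular has polynomial size. Since the pattern $\{P\} \cup \{(Q_R, q_{R,1}, \ldots, q_{R,a(R)}) : R \in \tau\}$ defining $\Psi$ is fixed, the structures $P$ and the $Q_R$ have constant size. The disjoint union $A$ in the definition of $\Psi^{-1} B$ consists of one copy $P_x$ for each $x$ in the universe of $B$ and one copy $Q_{R,(x_1,\ldots,x_r)}$ for each tuple $(x_1,\ldots,x_r) \in R(B)$; hence its universe has size $O(|B| + \sum_{R \in \tau} |R(B)|)$, which is linear in the size of~$B$. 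The prescribed identifications yield $O(|B|)$ pairs in the generating relation of~$\sim$, so a standard union--find computation produces the equivalence classes, and hence the quotient $\Psi^{-1} B$, in polynomial (in fact almost-linear) time.

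Composing the polynomial-time construction of $\Psi^{-1} B$ with the polynomial-time algorithm for $\mathrm{CSP}(A)$ gives a polynomial-time algorithm for $\mathrm{CSP}(\Psi A)$. The hard part is essentially non-existent here: the only genuine content beyond the adjunction of Proposition~\ref{prop:Psi} is the bookkeeping that the pattern is fixed, so that each element and each tuple of~$B$ contributes only a constant amount of data to $\Psi^{-1} B$.
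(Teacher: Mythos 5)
Your proof is correct and is exactly the argument the paper leaves implicit: the corollary is stated with \qed because the authors regard the polynomial-time reduction $B \mapsto \Psi^{-1}B$ combined with Proposition~\ref{prop:Psi} as immediate. The bookkeeping you supply about the fixed pattern giving a linear-size disjoint union and an efficiently computable quotient is precisely the content that justifies the \qed.
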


In fact, Corollary~\ref{cor:arc} generalises as follows.

\begin{theorem} \label{thm:psi}
If a $\sigma$-structure $A$ has tree duality, 
then the $\tau$-structure $\Psi A $ also
has tree duality.
\end{theorem}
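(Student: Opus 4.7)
The plan is to generalise the proof of Theorem~\ref{thm:spr-thm} to the setting of an arbitrary right-adjoint functor~$\Psi$ defined by a pattern $(P, \{(Q_R, q_{R,1}, \dotsc, q_{R,a(R)})\})$. Let $\F$ be a complete set of $\sigma$-tree obstructions for~$A$. I would define, for each tree $T \in \F$, a set $\PsiS(T)$ of $\tau$-trees playing the role of sproinks, and prove that $\PsiS(\F) := \bigcup_{T \in \F} \PsiS(T)$ is a complete set of tree obstructions for~$\Psi A$. A $\tau$-tree~$S$ would belong to~$\PsiS(T)$ if~$T$ can be decomposed as the union of one copy of~$P$ for each element of the universe of~$S$, together with copies of the pattern structures~$Q_R$ glueing these $P$-copies in accordance with the tuples of~$S$ and the pattern homomorphisms~$q_{R,i}$. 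Specialised to the arc-graph pattern this reproduces the original sproink; more conceptually, $S \in \PsiS(T)$ records a tree-like presentation of~$T$ as a substructure of~$\Psi^{-1} S$.

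With this notion in hand, the two halves of the proof mirror Lemma~\ref{lem:sproink} and Theorem~\ref{thm:spr-thm}. First, for every $S \in \PsiS(T)$ the construction yields a homomorphism $T \to \Psi^{-1} S$; combined with Proposition~\ref{prop:Psi} and $T \notto A$ this gives $S \notto \Psi A$, so $\PsiS(\F)$ is a family of obstructions. Second, given any $\tau$-structure~$B$ with $B \notto \Psi A$, Proposition~\ref{prop:Psi} gives $\Psi^{-1} B \notto A$, so some $T \in \F$ admits a homomorphism $f\colon T \to \Psi^{-1} B$; the presentation of~$\Psi^{-1} B$ as a quotient of disjoint copies of~$P$ and the~$Q_R$ then pulls~$f$ apart into the data of some $S \in \PsiS(T)$ together with a homomorphism $S \to B$, just as in the second half of the proof of Theorem~\ref{thm:spr-thm}.

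The main technical obstacle will be verifying that the sproink construction really produces $\tau$-trees (in the incidence-graph sense of~\cite{LoTar:mosfd}) rather than arbitrary $\tau$-structures. In the digraph case this is automatic because~$P$ is a single arc and~$Q$ a length-two path, so attachments never create cycles. For a general pattern the $Q_R$ may contain several tuples, and the identifications along the~$q_{R,i}$ could in principle create shortcuts; the argument will therefore have to process the elements of~$T$ in a tree order and attach $P$-pieces (together with the $Q_R$-pieces they force) inductively, using the tree structure of~$T$ to show that the incidence bipartite graph of the resulting~$S$ stays acyclic throughout. The compatibility of the pattern data $(Q_R, q_{R,i})$ with the notion of $\sigma$-tree is the substantive point on which the whole argument hinges.
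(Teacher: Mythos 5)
Your proposal follows a fundamentally different route from the paper, and it runs into exactly the difficulty that you flag at the end but do not resolve. The paper does \emph{not} generalise the $\Sproink$ construction; it sidesteps the issue entirely by invoking the Feder--Vardi characterisation (Theorem~\ref{thm:u}): $A$ has tree duality if and only if $\U A \to A$. Given $f\colon\U A\to A$, the paper constructs directly a homomorphism $\hat f\colon\U\Psi A\to\Psi A$ by putting, for each nonempty $S\subseteq\Psi A$ and each $p\in P$, $S_p=\{g(p):g\in S\}$ and $\hat f(S)=(p\mapsto f(S_p))$, and then checking these are homomorphisms. That argument never mentions tree obstructions at all and needs no hypothesis on the pattern $(P,\{Q_R\})$.

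By contrast, your plan hinges on defining $\PsiS(T)$ so that (a) every $S\in\PsiS(T)$ is a $\tau$-tree, and (b) every homomorphism $f\colon T\to\Psi^{-1}B$ ``pulls apart'' into some $S\in\PsiS(T)$ with $S\to B$. Step (b) is the crux, and it is genuinely unclear how to make it work for an arbitrary pattern: the $\sim$-classes in $\Psi^{-1}B$ arise from gluing copies of $P$ and of the various $Q_R$ along the images of the $q_{R,i}$, and for a pattern with complicated $Q_R$'s (several tuples, overlapping $q_{R,i}$-images, non-tree incidence graph) there is no reason the fibre of $f$ over a $\sim$-class admits a tree-like presentation, nor that the resulting $S$ is a $\tau$-tree, nor even that a suitable $S$ exists at all. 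The paper itself states, immediately after the theorem, that ``we are unable to provide an explicit description of the tree obstructions of $\Psi A$ in terms of those of $A$ for a general right adjoint $\Psi$,'' and only carries out a $\Sproink$-style construction in an isolated example. So the obstacle you identify is not a technicality to be processed ``in a tree order''; it is the open part of the problem, and your proposal as written does not close it. If you want a complete proof of Theorem~\ref{thm:psi}, you should switch to the $\U$-functor characterisation, which reduces the statement to exhibiting a single homomorphism $\U\Psi A\to\Psi A$.
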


We prove Theorem~\ref{thm:psi} using Feder and Vardi's characterisation
of structures
with tree duality. For a $\sigma$-structure $A$, let
$\U A$ be the $\sigma$-structure defined as follows. The
universe of $\U A$ is the set of all nonempty subsets of 
$A$, and for $R \in \sigma$ of arity $r$,
$R(\U A)$ is the set of all $r$-tuples 
$(X_1, \ldots, X_{r})$ such that for all $j \in \{1, \ldots, r\}$ and
$x_j \in X_j$ there exist $x_k \in X_k, k \in \{1, \ldots,
r\}\setminus \{j\}$ such that $(x_1, \ldots, x_{r}) \in
R(A)$.

\begin{theorem}[\cite{FedVar:SNP}] \label{thm:u}
A structure $A$ has tree duality if and only if there exists
a homomorphism from $\U A$ to $A$.
\qed
\end{theorem}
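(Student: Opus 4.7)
The plan is to invoke Feder and Vardi's criterion (Theorem~\ref{thm:u}) at both ends. Assuming $A$ has tree duality, there is a homomorphism $h:\U A\to A$, and my goal is to exhibit a homomorphism $\U(\Psi A)\to\Psi A$, which by Theorem~\ref{thm:u} will then give tree duality for~$\Psi A$. I will obtain the desired map as the composite
\[
\U(\Psi A)\xrightarrow{\phi}\Psi(\U A)\xrightarrow{\Psi h}\Psi A,
\]
where $\Psi h$ denotes the functorial image of $h$ and $\phi$ is the ``pointwise union'' map sending a nonempty family $\F\subseteq V(\Psi A)$ of homomorphisms $P\to A$ to $\phi(\F):p\mapsto\{f(p):f\in\F\}$.

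The first check is that $\phi(\F)$ genuinely is a homomorphism $P\to\U A$, hence a legitimate vertex of $\Psi(\U A)$. Given a tuple $(p_1,\dots,p_{r'})$ in a $\sigma$-relation $R'$ of~$P$, a coordinate~$j$, and an element $f(p_j)\in\phi(\F)(p_j)$ with $f\in\F$, the values $f(p_k)\in\phi(\F)(p_k)$ for $k\ne j$ provide the required elements because $(f(p_1),\dots,f(p_{r'}))\in R'(A)$, as $f$~is itself a homomorphism.

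The more substantial step is that $\phi$ preserves the $\tau$-relations. Suppose $(\F_1,\dots,\F_r)\in R(\U(\Psi A))$. For every coordinate~$i$ and every $f\in\F_i$, unpacking the definition of $\U$ produces a tuple $(f_1,\dots,f_r)\in R(\Psi A)$ with $f_i=f$ and $f_k\in\F_k$ for all~$k$, and this tuple is witnessed by a homomorphism $g_{i,f}:Q_R\to A$ satisfying $g_{i,f}\circ q_{R,k}=f_k$. Collecting these, set $G(q)=\{g_{i,f}(q):1\le i\le r,\,f\in\F_i\}$. Then $G:Q_R\to\U A$ is a $\sigma$-homomorphism, because given any tuple in an $R'$-relation of $Q_R$ and any distinguished value $g_{i,f}(q)$ in one coordinate of $G(\cdot)$, the remaining coordinates can be filled in using the same $g_{i,f}$, which preserves~$R'$. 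Moreover $G\circ q_{R,i}=\phi(\F_i)$ pointwise: $\supseteq$ holds because each $f\in\F_i$ arises as $g_{i,f}\circ q_{R,i}$, and $\subseteq$ because each $g_{j,f}\circ q_{R,i}$ equals the $i$-th component of its witnessed tuple, which belongs to~$\F_i$. Hence $(\phi(\F_1),\dots,\phi(\F_r))\in R(\Psi(\U A))$, witnessed by~$G$.

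The main subtlety is in this last step: the witness family must be indexed by \emph{all} pairs $(i,f)$ with $f\in\F_i$ rather than a single witness per input tuple, precisely so that $G\circ q_{R,i}$ recovers \emph{every} element of $\phi(\F_i)$ and not merely a subset. Once $\phi$ is confirmed a $\tau$-homomorphism, composing with $\Psi h$ yields the required $\U(\Psi A)\to\Psi A$, and Theorem~\ref{thm:u} delivers tree duality for~$\Psi A$.
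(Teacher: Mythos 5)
The statement you were asked about, Theorem~\ref{thm:u}, is Feder and Vardi's characterisation of tree duality; in this paper it appears as a cited result with no proof, so there is no internal argument to compare against. Your proposal does not prove that characterisation --- it explicitly invokes it ``at both ends'' --- and what it actually establishes is Theorem~\ref{thm:psi}, that the right adjoint $\Psi$ preserves tree duality.

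Read as a proof of Theorem~\ref{thm:psi}, your argument is correct and is in essence the paper's own, repackaged. The paper constructs $\hat f\colon\U\Psi A\to\Psi A$ directly by setting $\hat f(S)(p)=f(S_p)$ where $S_p=\{g(p):g\in S\}$; you factor the same map through $\Psi(\U A)$ as $\Psi h\circ\phi$, with $\phi$ the pointwise-union map. Unwinding the action of $\Psi$ on $h$ gives $(\Psi h\circ\phi)(S)(p)=h(\{g(p):g\in S\})=h(S_p)$, which is exactly the paper's $\hat f$. Your verification that $\phi$ preserves the $\tau$-relations, by assembling $G(q)=\{g_{i,f}(q):1\le i\le r,\ f\in S_i\}$ over all pairs $(i,f)$, is the same computation the paper encodes via its set $T_x$, and your closing remark about ranging over all such pairs (so that $G\circ q_{R,i}$ recovers the whole of $\phi(S_i)$ rather than a subset) is precisely the subtlety the paper's phrasing is handling. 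The factorisation through $\Psi(\U A)$ is a mild conceptual cleanup: it isolates the new content, the comparison map $\U\Psi A\to\Psi\U A$, from the step that is pure functoriality, namely post-composing with $h$.
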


\begin{proof}[Proof of Theorem~\ref{thm:psi}]
Suppose $A$ has tree duality. Then there is a homomorphism
$f:\U A\to A$. Let $U=\P(\Psi A )\setminus\{\emptyset\}$ be the 
universe of~$\U\Psi  A $ and let $S\in U$. 
For $p \in P$, define $S_p ={\{f(p): f \in S\}} \in \U A$,
and $f_S(p) = f(S_p)$. We claim that $f_S: P \to A$ is a 
homomorphism. Indeed, for $R \in \sigma$ and
$(p_1, \ldots, p_r) \in R(P)$, the $r$-tuples
$(f(p_1), \ldots, f(p_r)) \in R(A)$ for all $f \in S$ prove that
$(S_{p_1}, \ldots, S_{p_r}) \in R(\U A)$,
whence $(f_S(p_1), \ldots, f_S(p_r)) = (f(S_{p_1}), \ldots, f(S_{p_r}))
\in R(A)$.

Thus we define a map
$\hat{f}: \U \Psi A  \to \Psi A $ by $\hat{f}(S) = f_S$.
We show that it is a homomorphism. For $R \in \tau$ and
$(S_1, \ldots, S_r) \in R(\U \Psi A )$, every 
$f_i \in S_i$, $1 \leq i \leq r$ is contained in an $r$-tuple
$(h_1, \ldots, h_r) \in R(\Psi A )$ with $f_j \in S_j$ for $1 \leq j \leq r$
and $h_i = f_i$,
whence there exists a homomorphism 
$g_{(h_1, \ldots, h_r)}: Q_R \to A$ such that
$h_j = g_{(h_1, \ldots, h_r)} \circ q_{R,j}$ for $j = 1, \ldots, r$.
For $x \in Q$, let $T_x$ be the set of all images
$g_{(h_1, \ldots, h_r)}(x) \in A$ (with $(S_1, \ldots, S_r)$ fixed),
and $g_{(S_1, \ldots, S_r)}(x) = f(T_x)$. Then
$g_{(S_1, \ldots, S_r)} : Q_R \to A$ is a homomorphism,
and for $x \in q_{R,j}(P)$ we have $T_x$ = $S_x$ (because they are images
of $x$ under restrictions of the same homomorphisms),
whence $g_{(S_1, \ldots, S_r)}(x) = f_{S_j}(x)$.
Thus $f_{S_j} = g_{(S_1, \ldots, S_r)} \circ q_{R,j}$ for $j = 1, \ldots, r$.
Consequently $(f_{S_1}, \ldots, f_{S_r}) 
= (\hat{f}(S_1), \ldots, \hat{f}(S_r))\in R(\Psi A )$.
This shows that $\hat{f}$ is a homomorphism.
\end{proof}

Unlike the case of the arc-graph construction, we are unable to provide
an explicit description of the tree obstructions of~$\Psi A $ in terms
of those of~$A$ for a general right adjoint~$\Psi$. However, in isolated
cases we can do it, as the following example shows.

\begin{exa}
The endofunctor~$\Psi$ on the category of digraphs is defined via the
pattern $\{P, (Q, q_1, q_2)\}$, where $P=\vec P_1$ is the one-arc
path $u\to v$, $Q=\vec P_3$ is the directed path $0\to1\to2\to3$, the
homomorphism $q_1: u\mapsto0,\ v\mapsto 1$, and finally $q_2: u\mapsto
2,\ v\mapsto 3$.

Let $T$ be a tree of algebraic height~$h$ and consider the unique homomorphism~$t$
from~$T$ to the directed path~$\vec P_h$. The arcs of~$T$ are of two
kinds: \emph{blue arcs} $A_b(T)=\{(x,y): t(x)=2k,\ t(y)=2k+1 \text{ for
some integer $k$}\}$ and \emph{red arcs} $A_r(T)=\{(x,y): t(x)=2k+1,\
t(y)=2k+2 \text{ for some integer $k$}\}$. We define two equivalence
relations on the vertices of~$T$: $x\sim_b y$ if the (not necessarily
directed) path from~$x$ to~$y$ in~$T$ has only blue arcs, and $x\sim_r y$
if the path from~$x$ to~$y$ in~$T$ has only red arcs.  Then $T$~has two
$\Psi$-Sproinks, namely $T/{\sim_b}$ and $T/{\sim_r}$ with loops removed.

For a collection~$\T$ of trees, let $\PsiS(\T)$ be the set of all
$\Psi$-Sproinks of the trees contained in~$\T$. We claim that if $\T$~is
a complete set of obstructions for a template~$H$, then $\PsiS(\T)$ is a
complete set of obstructions for~$\Psi H$. To prove it, we follow the idea
of the proofs of Lemma~\ref{lem:sproink} and Theorem~\ref{thm:spr-thm}.

First we prove that $T\to\Psi^{-1}(T/{\sim_b})$. This is not difficult: every
blue arc of~$T$ was contracted to a vertex of~$T/{\sim_b}$ and this vertex
was blown up to an arc in $T\to\Psi^{-1}(T/{\sim_b})$. Thus we can map blue
arcs to the corresponding blown-up arcs. Red arcs of~$T$ are also arcs
of~$T/{\sim_b}$, and hence we can map each red arc to the arc $(1,2)$
of the corresponding copy of~$Q$ in $\Psi^{-1}(T/{\sim_b})$. Clearly such
a mapping is a homomorphism.

Analogously we show that $T\to\Psi^{-1}T/{\sim_r}$.

Finally we want to prove that if $T\to\Psi^{-1}G$, then
either $T/{\sim_b}\to G$ or $T/{\sim_r}\to G$. Suppose that
$f:T\to\Psi^{-1}G$. Then some arcs of~$T$ are mapped by~$f$ to arcs
corresponding to vertices of~$G$ (arcs of copies of~$P$), and others
are mapped to arcs corresponding to arcs of~$G$ (arcs $(1,2)$ of copies
of~$Q$). Let us call the former v-arcs and the latter a-arcs. It follows
from the definition of~$\Psi^{-1}$ that either all blue arcs of~$T$
are v-arcs and all red arcs of~$T$ are a-arcs, or all blue arcs of~$T$
are a-arcs and all red arcs of~$T$ are v-arcs. In the former case
$T/{\sim_b}\to G$, while in the latter case $T/{\sim_r}\to G$.
\end{exa}

It is notable that in the above example each tree obstruction for~$H$
generates finitely many obstructions for~$\Psi H $. This is no accident.

\begin{theorem} \label{thm:fidu}
Let $\Psi$ be a functor generated by a pattern
$\{ P \} \cup \{ (Q_R, q_{R,1}, \ldots, q_{R,a(R)}) : R \in \tau \}$,
where for every $R \in \tau$ and $1 \leq i < j \leq a(R)$,
the image $q_{R,i}(P)$ is vertex-disjoint from $q_{R,j}(P)$. If a
$\sigma$-structure $A$ has finite duality, then the $\tau$-structure
$\Psi A $ also has finite duality.
\end{theorem}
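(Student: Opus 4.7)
The strategy is to adapt the $\Sproink$ construction of Section~2 to the general functor $\Psi$, and then use the vertex-disjointness hypothesis to force finiteness of the resulting family of obstructions. Since $A$ has finite duality, it admits a finite complete set $\T$ of $\sigma$-tree obstructions; this follows from the Ne\v{s}et\v{r}il--Tardif characterisation (extended to relational structures in~\cite{LoTar:mosfd}). The goal is to define, for each $T \in \T$, a finite set $\PsiS(T)$ of $\tau$-trees whose union $\PsiS(\T) = \bigcup_{T \in \T}\PsiS(T)$ is a complete set of obstructions for $\Psi A$.

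The set $\PsiS(T)$ is defined by mimicking the Sproink construction. A $\Psi$-sproink of $T$ is a $\tau$-tree $S$ arising from a tile-decomposition of $T$ into copies of the blocks $P$ and $Q_R$ ($R \in \tau$), glued according to the identifications used in the construction of $\Psi^{-1}S$; elements of $S$ correspond to $P$-pieces, and tuples of $R(S)$ correspond to $Q_R$-tiles. By arguments exactly parallel to the proofs of Lemma~\ref{lem:sproink} and Theorem~\ref{thm:spr-thm}---relying only on Proposition~\ref{prop:Psi} and the description of $\Psi^{-1}$---one obtains two facts: (a) each $S \in \PsiS(T)$ satisfies $T \to \Psi^{-1}S$, so that $T \notto A$ forces $S \notto \Psi A$; (b) any homomorphism $T \to \Psi^{-1}G$ into a $\tau$-structure $G$ factors through some $S \in \PsiS(T)$, yielding $S \to G$. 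These two facts together imply that $\PsiS(\T)$ is a complete set of tree obstructions for $\Psi A$.

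The crux of the argument is proving that $\PsiS(T)$ is finite. In general it is not: as in Section~2, the trees $F(u)$ may be augmented with arbitrarily many additional leaves without affecting the existence of the homomorphism $T \to \Psi^{-1}S$, so that a given $T$ has infinitely many sproinks. The role of the vertex-disjointness hypothesis is precisely to block this phenomenon. When the images $q_{R,i}(P)$ are pairwise vertex-disjoint in $Q_R$, each element of $T$ lies in at most one $P$-piece of each $Q_R$-tile containing it; hence any tile-decomposition witnessing $T \to \Psi^{-1}S$ is rigidly determined by how the relation tuples of $T$ are grouped into $Q_R$-tiles, together with a finite amount of gluing data along the shared $P$-pieces. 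Since $T$ is finite and each $Q_R$ is finite, the number of such groupings is finite, whence $\PsiS(T)$ is finite as required. The main obstacle is to make the notion of tile-decomposition rigorous for arbitrary $\sigma$-structures (not just digraphs) and to verify carefully that the vertex-disjointness hypothesis really forces the finiteness of the decomposition count, ruling out the unbounded growth of auxiliary $P$-tiles that would otherwise destroy the argument.
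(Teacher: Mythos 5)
Your approach is genuinely different from the paper's, and it has a real gap at its crux, which you yourself flag in the final sentence of your proposal.

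The paper does not attempt to construct an explicit finite obstruction set for $\Psi A$. Instead it invokes the Larose--Loten--Tardif characterisation (\cite{Tar:FOD}): a structure has finite duality iff some retract has a square that dismantles to its diagonal. Taking $A$ to be a core, the paper lifts a dismantling sequence $(x_1,y_1),\dots,(x_k,y_k)$ of $A\times A$ on $\Delta_{A\times A}$ to a dismantling of $\Psi A\times\Psi A \cong \Psi(A\times A)$ on its diagonal, removing homomorphisms $f:P\to A\times A$ one at a time by replacing the value at a single $p_0$ with the dominating element. The vertex-disjointness hypothesis enters at exactly one place: when modifying a witnessing homomorphism $h:Q_R\to X_i$ at the vertex $q_{R,j}(p_0)$, one must not disturb the compositions $h\circ q_{R,\ell}$ for $\ell\neq j$, which is guaranteed only because $q_{R,j}(P)$ is disjoint from $q_{R,\ell}(P)$. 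That single, local observation is the whole content of the disjointness assumption; it never touches obstruction sets at all. Your route -- lift the $\Sproink$ machinery to $\Psi$ and then \emph{count} -- would, if completed, yield something stronger (an explicit finite obstruction set, not just existence), but that is exactly why it is harder.

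Your finiteness argument is where things break down, and you acknowledge this: ``the main obstacle is \dots to verify carefully that the vertex-disjointness hypothesis really forces the finiteness of the decomposition count, ruling out the unbounded growth of auxiliary $P$-tiles.'' That is not a footnote-level obstacle; it is the theorem. In the digraph $\Sproink$ construction the trees $F(u)$ can carry arbitrarily many extra leaves, so $\Sproink(T)$ is always infinite; you are asserting, without proof, that vertex-disjointness rigidifies the decomposition enough to block the analogous inflation in general. Nothing in your sketch shows why an element of $S$ that participates in no $Q_R$-tile (contributing an isolated copy of $P$ to $\Psi^{-1}S$) could not be adjoined indefinitely, nor why the gluing of $Q_R$-tiles along shared $P$-pieces cannot be refined in unboundedly many inequivalent ways. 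The claim ``rigidly determined by how the relation tuples of $T$ are grouped into $Q_R$-tiles'' presupposes that the only degrees of freedom are combinatorial groupings of a fixed finite tuple set, but you have not defined the class of admissible decompositions precisely enough to make that count. You would also need to check that the resulting $S$ are $\tau$-trees (in the incidence-graph sense used for relational structures), which is not automatic. Until the finiteness and tree-ness of $\PsiS(T)$ are actually established, the proof does not go through. The paper's dismantling argument sidesteps all of this, at the cost of being non-constructive about obstructions -- a trade-off the authors themselves remark on when they say they cannot give an explicit obstruction set for general $\Psi$.
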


The proof uses the characterisation of structures with finite
duality of~\cite{Tar:FOD}. The \emph{square} of a 
$\sigma$-structure~$B$ is the structure~$B \times B$. 
It contains the \emph{diagonal}
$\Delta_{B \times B} = \{ (b,b) : b \in B\}$.
An element~$a$ of~$B$ is \emph{dominated} by
an element~$b$ of~$B$ if for every $R \in \sigma$, for every~$i$ and
every $(x_1,\ldots,x_{a(R)}) \in R(B)$ with $x_i = a$, we have
$(y_1,\ldots,y_{a(R)}) \in R(B)$ with $y_i = b$ and $y_j = x_j$ for $j \neq i$.
A structure~$B$ \emph{dismantles to} its induced substructure~$C$
if there exists a sequence $x_1,\dots,x_k$ of distinct elements 
of~$B$ such that $B\setminus C = \{x_1,\dots,x_k\}$ and for each 
$1 \leq i \leq k$ the element $x_i$ is dominated in the structure 
induced by $C \cup \{x_i,\dots,x_k\}$. 
The sequence $x_1,\dots,x_k$ is then called a \emph{dismantling
sequence} of~$B$ on~$C$. 

\begin{theorem}[\cite{Tar:FOD}]
A structure has finite duality if and only if it has a retract whose
square dismantles to its diagonal.
\qed
\end{theorem}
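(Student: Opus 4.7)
The plan is to apply the characterisation of finite duality stated above: it suffices to exhibit a retract of $\Psi A $ whose square dismantles to its diagonal. If $A'$ is a retract of $A$ with $A'\times A'$ dismantling to $\Delta_{A'\times A'}$, then, since $\Psi$ is a right adjoint, it preserves retracts and products, giving a retract $\Psi A'$ of $\Psi A $ with $(\Psi A')^2$ naturally isomorphic to $\Psi(A'\times A')$. Under this isomorphism the diagonal $\Delta_{\Psi A'\times\Psi A'}$ corresponds to the substructure of $\Psi(A'\times A')$ consisting of homomorphisms $P\to A'\times A'$ factoring through $\Delta_{A'\times A'}$, i.e.\ to the image of the natural embedding $\Psi\Delta_{A'\times A'}\hookrightarrow \Psi(A'\times A')$. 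So the proof reduces to the following key lemma: if a $\sigma$-structure $B$ dismantles to an induced substructure $C$, then $\Psi B$ dismantles to $\Psi C$.

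By concatenating one-step dismantlings it is enough to handle the case $C=B\setminus\{b\}$ where $b$ is dominated in $B$ by some $b'\neq b$. For each homomorphism $f: P\to B$ with $b\in f(P)$, I would define $f':P\to B$ by $f'(p)=b'$ when $f(p)=b$ and $f'(p)=f(p)$ otherwise; iterating the single-coordinate definition of domination coordinate-by-coordinate shows that $f'$ is again a homomorphism, and by construction $f'\in\Psi C$. The core claim will be that $f$ is dominated by $f'$ in $\Psi B$; once this is established, the desired dismantling of $\Psi B$ on $\Psi C$ follows by enumerating and removing the finitely many elements of $\Psi B\setminus\Psi C$ in any order, each $f$ being dominated by $f'\in\Psi C$ in every intermediate substructure.

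To prove the core claim, consider a tuple $(f_1,\dots,f_r)\in R(\Psi B)$ with $f_i=f$, witnessed by $g:Q_R\to B$ satisfying $g\circ q_{R,j}=f_j$ for all $j$. I plan to define $g':Q_R\to B$ by $g'(q_{R,i}(p))=f'(p)$ for $p\in P$ and $g'(v)=g(v)$ for $v\notin q_{R,i}(P)$. Well-definedness on $q_{R,i}(P)$ is forced by $g\circ q_{R,i}=f$: if $q_{R,i}(p_1)=q_{R,i}(p_2)$ then $f(p_1)=f(p_2)$ and hence $f'(p_1)=f'(p_2)$. The vertex-disjointness hypothesis enters precisely at this point: it guarantees $q_{R,j}(P)\cap q_{R,i}(P)=\emptyset$ for $j\neq i$, so that $g'$ still agrees with $g$ on each $q_{R,j}(P)$ and thus $g'\circ q_{R,j}=f_j$ persists, while by construction $g'\circ q_{R,i}=f'$.

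The main obstacle will be to verify that $g'$ is itself a homomorphism. Given $R'\in\sigma$ and $(v_1,\dots,v_{r'})\in R'(Q_R)$, the tuple $(g'(v_k))_k$ differs from $(g(v_k))_k\in R'(B)$ only at those coordinates $k$ with $v_k=q_{R,i}(p_k)\in q_{R,i}(P)$ and $f(p_k)=b$, where the entry is changed from $b$ to $b'$. Applying the single-coordinate domination of $b$ by $b'$ in $B$ one coordinate at a time, each intermediate tuple remains in $R'(B)$, so $(g'(v_k))_k\in R'(B)$ as required. This closes the key lemma and, via the reduction above, the theorem.
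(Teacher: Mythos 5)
The statement you were handed is the characterisation of finite duality cited from~\cite{Tar:FOD}; the paper itself offers no proof of it, it simply invokes it. What you have actually written is a proof of Theorem~\ref{thm:fidu}, which \emph{uses} this characterisation as a black box. Read as such, your argument is correct and follows the paper's overall strategy: pass to a retract whose square dismantles to its diagonal (the paper takes $A$ to be a core), use that $\Psi$ preserves products so $\Psi A\times\Psi A\cong\Psi(A\times A)$, and then lift a one-step domination in $B$ to one in $\Psi B$ by modifying each witness $g:Q_R\to B$ only on $q_{R,i}(P)$, with vertex-disjointness of the $q_{R,j}(P)$ guaranteeing that the other coordinates $f_j$ are unchanged.

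The genuine divergence is in how the dismantling is organised. The paper alters the value of $f$ at a \emph{single} point $p_0$ of $P$, so the dominating map $g$ may still take the value $a=(x_i,y_i)$ at other points and need not lie in $\Psi X_{i+1}$; to ensure the dominator survives until $f$ is removed, the paper deletes functions in the order imposed by a fixed enumeration $p_1,\dots,p_m$ of $P$ (first all $f$ with $f(p_1)=a$, then those with $f(p_2)=a$, etc.). You instead replace \emph{all} preimages of the dominated element at once, producing a dominator $f'$ that already lies in $\Psi C$ and is therefore never deleted, so the elements of $\Psi B\setminus\Psi C$ can be removed in arbitrary order. The small extra cost is an iterated single-coordinate domination check (changing several coordinates of a relational tuple from $b$ to $b'$ one at a time stays in the relation), which you correctly supply both to show $f'$ is a homomorphism and to show the modified witness $g'$ is one. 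You also isolate a cleaner reusable lemma --- if $B$ dismantles to $C$ then $\Psi B$ dismantles to $\Psi C$ --- whereas the paper keeps each step literal at the price of the ordering argument. Both are valid; the mathematical content coincides.
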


\begin{proof}[Proof of Theorem~\ref{thm:fidu}]
Let $A$ be a $\sigma$-structure with finite duality. Without loss of
generality, we assume that $A$ is a core, so that $A$ has no
proper retracts; thus the square of $A$ dismantles to its diagonal.
Let $(x_1,y_1), \ldots, (x_k, y_k)$ be a dismantling sequence
of $A \times A$ on $\Delta_{A \times A}$. Then 
$\Psi(A \times A) \cong \Psi A  \times \Psi A $; we want to prove
that it dismantles to 
$\Delta_{\Psi A  \times \Psi A } \cong \Psi \Delta_{A \times A} $.

For $i \in \{1, \ldots, k\}$, define $X_i$ to be the substructure
of $A \times A$ induced by the set 
$\Delta_{A \times A} \cup \{(x_i,y_i), \ldots, (x_k,y_k)\}$,
and let $X_{k+1} = \Delta_{A \times A}$.
We will show that $\Psi X_i $ can be dismantled to
$\Psi X_{i+1} $, $i = 1, \ldots, k$.

Let $b = (b_1,b_2)$ be an element dominating $a = (x_i,y_i)$
in $X_i$.
Let $f \in \Psi X_i  \setminus \Psi X_{i+1} $, 
and assume that $f = (f_1, f_2): P \to A \times A$.
Then there exists
(at least one) $p_0 \in P$ such that 
$f(p_0) = a$. We define 
$g = (g_1, g_2): P \to A \times A$ by
$g(p_0) = b$ and $g(p) = f(p)$ if $p \neq p_0$. 
Since $b$ dominates $a$, $g$ is a homomorphism,
and obviously $g \in \Psi X_i $. We claim that
$g$ dominates $f$. Indeed, for $R \in \tau$
and $(f_1, \ldots, f_{a(R)}) \in R(\Psi X_i )$
such that $f = f_j$, there exists a homomorphism
$h: Q_R \to X_i$ such that $f = h \circ q_{R,j}$.
Define $h': Q_R \to X_i$ by $h'(q_{R,j}(p_0)) = b$
and $h'(z) = h(z)$ for $z \neq q_{R,j}(p_0)$.
Since $b$ dominates $a = h(q_{R,j}(p_0))$,
the mapping~$h'$ is a homomorphism. By hypothesis, for $\ell \neq j$,
the image $q_{R,\ell}(P)$ is disjoint from $q_{R,j}(P)$,
whence $f_{\ell} = h' \circ q_{R,\ell}$, while
$h' \circ q_{R,j} = g$. Therefore $R(\Psi X_i )$ contains
all the $a(R)$-tuples needed to establish the domination of~$f$
by~$g$.

Let $p_1, p_2, \ldots, p_m$ be an enumeration
of the elements of $P$. We dismantle $\Psi X_i $
to $\Psi X_{i+1} $ by successively removing the functions
$f$ such that $f(p_j) = (x_i,y_i)$ for $j = 1, \ldots, m$.
Proceeding in this way for $i = 1, \ldots, k$,
we get a dismantling of $\Psi A  \times \Psi A  \cong \Psi X_1 $
to $\Psi X_{k+1}  \cong \Delta_{\Psi A  \times \Psi A }$.
Therefore $\Psi A $ has finite duality.
\end{proof}

Perhaps the lack of knowledge of a general construction is natural since
there is no restriction on the pattern
$\{ P \} \cup \{ (Q_R, q_{R,1}, \ldots, q_{R,a(R)}) : R \in \tau \}$.
On the other hand, there are many possible transformations
${\cal T}'$ on a family ${\cal T}$ of tree obstructions,
in the style of $\Sproink({\cal T})$.
Any such transformation gives rise to a complete set
of obstructions to homomorphisms into a structure
$H' = \Pi_{T \in {\cal T}'}D_T$; however in general there
is no way of guaranteeing that such structure $H'$ is finite,
even when ${\cal T}$ is a complete set of obstructions for
a finite structure $H$. 

\section{Concluding comments}

In this paper we tried to shed more light on the structure of
tractable templates with tree duality. Let us
turn our attention one more time to Fig.~\ref{fig:diag}. The grey areas
in the diagram are areas that need a closer look in future research.

Currently we do not know any digraph with a near-unanimity function
and with bounded-height tree duality that could not be generated using right
adjoints and products, starting from digraphs with finite duality;
it is not clear whether any such ``reasonable'' class of
structures with tree duality can be generated from structures
with finite duality with a ``reasonable'' set of adjoint functors.

We have shown here that possession of bounded-height tree duality is
decidable. It is natural to ask what its complexity is; in particular,
whether it is complete for some class of problems.

Equally interesting is the decidability of membership in other
classes depicted in Fig.~\ref{fig:diag}. Tree duality is known to be
decidable~\cite{FedVar:SNP}, but not known to be in PSPACE. Our decision
procedure for bounded-height duality is in PSPACE for graphs with tree
duality; this suggests that checking tree duality may be harder than
checking bounded height of the obstructions.

Furthermore, finite duality is
NP-complete~\cite{Tar:FOD}. The decidability of bounded-tree\-width duality
is unknown, and so is the decidability of a near-unanimity function
(see~\cite{Mar:ExNUF} for a related result).

The properties of near-unanimity functions proved in the proof of
Proposition~\ref{prop:dpc} (i) in the context of digraphs 
and the arc-graph construction, also hold in the context
of general structures and right adjoints. The proofs 
carry over naturally.

\end{document}